\allowdisplaybreaks \numberwithin{equation}{section}
\theoremstyle{plain}
\newtheorem{theorem}{Theorem}[section]
\newtheorem{proposition}[theorem]{Proposition}
\newtheorem{lemma}[theorem]{Lemma}
\theoremstyle{definition}
\newtheorem{definition}[theorem]{Definition}
\newtheorem{remark}[theorem]{Remark}
\newcommand{\e}{\epsilon}
\newcommand{\ep}{\varepsilon}
\newcommand{\re}{\mathbb R}
\renewcommand{\O}{\Omega }
\newcommand{\p}{\partial }
\def \D {\mathbb D}
\title[Minimization of the Dirichlet-Laplace eigenvalues - diameter constraint]{Minimization of the eigenvalues of the Dirichlet-Laplacian with a diameter constraint}
\author{B. Bogosel, A. Henrot, I. Lucardesi}
\begin{document}

\begin{abstract}
In this paper we look for the domains minimizing the $h$-th eigenvalue of the Dirichlet-Laplacian $\lambda_h$
with a constraint on the diameter. Existence of an optimal domain is easily obtained, and is attained at a constant width body. In the case of a simple eigenvalue, we provide non standard (i.e., non local) optimality conditions. Then we address the question whether or not the disk is an optimal domain in the plane, and we give the precise list of the 17 eigenvalues for which the disk is a local minimum. We conclude by some numerical simulations showing the 20 first optimal domains in the plane.
\end{abstract}

\maketitle

\medskip

Keywords: Dirichlet eigenvalues, spectral geometry, diameter constraint, body of constant width.


\section{Introduction}
Among classical questions in spectral geometry, the problem of minimizing (or maximizing) the eigenvalues
of the Laplace operator with various boundary conditions and various geometric constraints has attracted
much attention since the first conjecture by Lord Rayleigh. In particular, several important open problems
have been solved these last twenty years. We refer e.g. to \cite{H1} and the recent book \cite{H2}
for a good overview on that topic.

Here we consider the eigenvalue problem for the Laplace operator with Dirichlet boundary conditions:
\begin{equation}\label{eigen1}
\left\{\begin{array}{ll}
-\Delta u = \lambda u \quad &  \hbox{in }\O
\\
u=0 \quad &  \hbox{on }\p \O \,.\end{array}
\right.
\end{equation}
In this case, denoting by $0<\lambda_1(\Omega) \leq \lambda_2(\Omega) \ldots$ the sequence of eigenvalues, the relevant problem is the {\it minimization} of $\lambda_h(\Omega)$.

The first constraint that has been considered is the volume one. The fact that the ball minimizes $\lambda_1(\Omega)$
is known as the (Rayleigh-)Faber-Krahn inequality and dates back to the 1920s, see \cite{Fab}, \cite{Kra1}. The second eigenvalue is minimized
by two identical balls: this is the Hong-Krahn-Szeg\"{o} inequality, see e.g. \cite{H1} and the recent \cite{H2} for a short
history of the problem. For the other eigenvalues, we had to wait until 2011-2012 to have a proof of existence of
minimizers, that has been achieved by two different approaches in \cite{Buc} and \cite{Maz-Pra}. The result is
\begin{theorem}[Bucur; Mazzoleni-Pratelli]
The problem
\begin{equation}\label{lb302}
\min \{\lambda_h(\O), \O \subset \mathbb{R}^N, |\O|=c\}
\end{equation}
has a solution. This one is bounded and has finite perimeter.
\end{theorem}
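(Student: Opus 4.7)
The main obstacle is the lack of compactness for sequences of competitors of prescribed volume: pieces can escape to infinity, split into arbitrarily many components, or develop increasingly irregular boundaries, and there is no natural Hausdorff or $L^1$ limit which is admissible. My strategy would be to relax the admissible class to quasi-open sets, first establish existence of a minimizer inside a large but fixed container using a capacitary/$\gamma$-convergence framework, then show that the box can be removed, and finally upgrade the regularity of the minimizer.

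The starting point is the framework of $\gamma$-convergence: on quasi-open subsets of a fixed ball $B_R$, the map $\Omega\mapsto\lambda_h(\Omega)$ is $\gamma$-continuous, and the subclass with $|\Omega|\le c$ is $\gamma$-compact (via \v{S}ver\'ak in the plane, or Buttazzo--Dal Maso in arbitrary dimension by passing through capacitary measures). Applied to a minimizing sequence, this produces a quasi-open minimizer $\Omega^*_R\subset B_R$ with $|\Omega^*_R|=c$, once one rules out volume loss in the limit (a strict loss could be compensated by adding a small ball, strictly decreasing $\lambda_h$, a contradiction).

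The crucial step is to show that for $R$ large enough the restricted infimum over $B_R$ equals the infimum over all of $\mathbb{R}^N$. This is where I expect most of the difficulty to lie. Given an almost-minimizer $\Omega$, I would try a surgery/concentration argument: decompose $\Omega$ into connected pieces and run a Faber--Krahn comparison on each. Any piece whose first Dirichlet eigenvalue exceeds $\lambda_h(\Omega)$ contributes nothing to the first $h$ eigenmodes and can be removed. The remaining pieces have $\lambda_1$ controlled from above, hence volume bounded from below by Faber--Krahn, so there can be only finitely many of them, each of bounded diameter, and one can translate them independently to fit inside a common $B_R$ without increasing $\lambda_h$.

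Once a minimizer $\Omega^*$ is obtained, boundedness is built in. For finite perimeter, I would pass to a Lagrangian formulation, replacing the constraint $|\Omega|=c$ by a penalty term $\mu(|\Omega|-c)^+$ with $\mu$ large, and exploit the quasi-minimality of $\Omega^*$ to derive density estimates on $\partial\Omega^*$ in the spirit of Alt--Caffarelli, which yield $\mathcal{H}^{N-1}(\partial^*\Omega^*)<\infty$. The genuinely hard part throughout is handling $\lambda_h$ for $h\ge 2$: unlike $\lambda_1$, it has no single-eigenfunction Rayleigh characterization, only a $\min$-$\max$ one, and it is only weakly monotone under inclusion, so every surgery step requires controlling the effect on the entire first $h$-dimensional spectral subspace rather than a single eigenfunction.
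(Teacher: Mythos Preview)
The paper does not prove this theorem at all: it is quoted in the introduction as a known background result, with the proof outsourced to the references \cite{Buc} and \cite{Maz-Pra}. So there is no ``paper's own proof'' to compare against; your task here was only to cite, not to reprove.

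That said, your sketch is a fair outline of the ideas in those references, with one soft spot. In your concentration step you write that the surviving connected pieces have $\lambda_1$ bounded above, hence volume bounded below by Faber--Krahn, hence ``each of bounded diameter''. The last implication is false: a ball with a long thin tentacle can have $\lambda_1$ as close to that of the ball as you like, volume barely larger, and arbitrarily large diameter. Controlling the \emph{number} of relevant pieces is fine via Faber--Krahn, but bounding the \emph{diameter} of each piece requires a genuine additional argument. In Mazzoleni--Pratelli this is done not by looking at connected components but by slicing the set with parallel hyperplanes and showing that if two slices are far apart one of the resulting half-sets can be discarded (or translated) without increasing $\lambda_h$; this is exactly where the min--max structure of $\lambda_h$ has to be handled carefully. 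Bucur's route to boundedness and finite perimeter is different again, going through the notion of shape subsolution and Alt--Caffarelli-type density estimates, which is closer in spirit to the last paragraph of your sketch.
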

The precise regularity of the minimizer is still unknown, see \cite[chapter 3]{H2} and \cite{BMPV}.

\medskip
For the perimeter constraint, existence and regularity is known, see \cite{dP-V}:
\begin{theorem}[De Philippis-Velichkov]
The problem
\begin{equation}\label{perimlk}
\min\{\lambda_h(\Omega), \Omega \subset \mathbb{R}^N, P(\Omega) \leq P_0\}
\end{equation}
has a solution. Its boundary is $C^{1,\alpha}$ outside a closed set of Hausdorff dimension $N-8$, for every $\alpha \in (0,1)$.
\end{theorem}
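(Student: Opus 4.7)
The plan is to split the statement into existence and regularity, following the general philosophy of shape optimization: first get a weak minimizer, then upgrade its regularity by reading off the Euler--Lagrange condition at the free boundary.

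For \emph{existence}, the strategy is a concentration-compactness argument. First I would take a minimizing sequence $(\Omega_n)$ and, via Faber--Krahn applied to $\lambda_1 \leq \lambda_h$ together with the isoperimetric inequality, derive that $|\Omega_n|$ is bounded. Denoting by $u_n^1,\dots,u_n^h$ the first $h$ $L^2$-orthonormal eigenfunctions of $\Omega_n$ extended by zero to $\mathbb{R}^N$, the bound on $\lambda_h(\Omega_n)$ gives $H^1$ bounds on the $u_n^i$, while the perimeter bound gives $BV$ compactness for $\1_{\Omega_n}$. The main obstacle is loss of compactness by translation to infinity or by splitting into arbitrarily many components. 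I would handle this by a Lions-type concentration-compactness procedure, showing that up to extracting a finite number of translated subsequences (at most $h$, since each surviving piece must support at least one eigenfunction) one can reduce to tight sequences whose limits are well-defined quasi-open sets. The candidate minimizer $\Omega$ is then obtained as the $\gamma$-limit; lower semicontinuity of $\lambda_h$ under $\gamma$-convergence and of $P$ under $L^1$-convergence of indicator functions closes the existence argument.

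For \emph{regularity}, I would penalize the constraint and work with the unconstrained problem
\[
\min\bigl\{\lambda_h(\Omega) + \Lambda P(\Omega)\bigr\}
\]
for a suitable Lagrange multiplier $\Lambda>0$. The formal Euler--Lagrange condition on the free boundary reads
\[
\sum_{i} c_i |\nabla u^i|^2 = \Lambda H_{\partial\Omega},
\]
summed over an orthonormal basis of the $\lambda_h$-eigenspace with appropriate coefficients $c_i$. The heart of the proof is then to make this rigorous. First one shows Lipschitz continuity of each eigenfunction $u^i$ up to $\partial \Omega$, by an Alt--Caffarelli / Brian\c{c}on--Hayouni--Pierre type argument, comparing $u^i$ to harmonic competitors in small balls and using the quasi-minimality of $\Omega$. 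Once $|\nabla u^i|\in L^\infty$ near $\partial\Omega$, the free boundary condition implies that $\Omega$ is an almost-minimizer of the perimeter: for every ball $B_r$ of small radius and every $F$ with $F\triangle\Omega\Subset B_r$,
\[
P(\Omega;B_r)\leq P(F;B_r) + C r^{N}.
\]
Tamanini's regularity theorem for almost-minimizers then yields $C^{1,\alpha}$ regularity of $\partial \Omega$ for every $\alpha\in(0,1)$ outside a closed singular set, and the bound $N-8$ on its Hausdorff dimension is inherited from the classification of area-minimizing singular cones.

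The hard part will be the Lipschitz continuity up to the free boundary. Unlike the $\lambda_1$ case, higher eigenfunctions change sign and their nodal sets intersect $\partial\Omega$ in a non-trivial way, so one cannot directly invoke scalar free boundary techniques. The way to circumvent this, which I expect to be the technical core of the proof, is to work simultaneously with the vector-valued map $(u^1,\dots,u^h)$ and to exploit the joint free boundary condition, reducing the multi-function estimate to a scalar Lipschitz bound for the quantity $\sum (u^i)^2$ via non-degeneracy of the eigenfunctions on the positivity set.
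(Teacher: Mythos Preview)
The paper does not prove this theorem; it is quoted in the introduction as a known result, attributed to De Philippis--Velichkov and referenced to \cite{dP-V}, purely to set the context before turning to the diameter constraint. There is therefore no ``paper's own proof'' to compare your proposal against.

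That said, your outline is a faithful sketch of the actual De Philippis--Velichkov argument: existence via a concentration-compactness reduction (building on the Bucur and Mazzoleni--Pratelli results for the volume-constrained problem), followed by Lipschitz regularity of the eigenfunctions on the optimal set, which then yields almost-minimality of the perimeter and hence $C^{1,\alpha}$ regularity with the $N-8$ singular set via Tamanini. Your identification of the Lipschitz estimate for vector-valued eigenfunctions as the technical core is accurate; this is precisely what is handled in \cite{BMPV}, on which \cite{dP-V} relies. One small correction: the existence part in \cite{dP-V} does not redo concentration-compactness from scratch but leans on the already-established boundedness of minimizers for the volume problem, so your existence sketch is somewhat more self-contained than what the cited paper actually does.
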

Obviously, due to the classical isoperimetric inequality, the ball is still the minimizer for $\lambda_1$. For $\lambda_2$
in two dimensions, the minimizer is a regular convex domain whose boundary has zero curvature exactly at two points,
see \cite{bubuhe}. For higher eigenvalues, general regularity results and qualitative properties of the minimizers are provided in \cite{bogreg}.

\medskip
In this paper, we are interested in the diameter constraint:
\begin{equation}\label{minD}
    \min\{\lambda_h(\Omega), \Omega\subset \mathbb{R}^N, D(\Omega)\leq D_0\}
\end{equation}
where $D(\Omega)$ denotes the diameter of the open set $\Omega$. Obviously the constraint $D(\Omega)\leq D_0$ can be replaced by
$D(\Omega)= D_0$. Existence of a minimizer for \eqref{minD} is easily obtained : since taking the convex hull does not change the diameter,
we can consider minimizing sequences of convex domains which ensure enough compactness and continuity, see Theorem \ref{existheo} below.
Moreover, we can prove that the minimizers are {\bf bodies of constant width}. We investigate more deeply the plane situation.
In particular, we give a complete list of values of $h$ for which the disk can or cannot be a minimizer.
By contrast with the case of the volume constraint, where the disk can be the minimizer only for $\lambda_1$ and $\lambda_3$ (for this
last case, it is still a conjecture), as proved by Amandine Berger in \cite{Ber}, here the list of values of $h$ for which the disk
can be the minimizer is long but finite! More precisely, we prove
\begin{theorem}\label{disks}
The disk is a weak local minimizer of problem \eqref{minD} for the following eigenvalues:
\begin{equation}\label{listdisk}
    \lambda_1\,,\,\lambda_2=\lambda_3\,,\, \lambda_4=\lambda_5\,,\, \lambda_7=\lambda_8\,,\, \lambda_{11}=\lambda_{12}\,,\, \lambda_{16}=\lambda_{17}\,,\,
\lambda_{27}\,,\, \lambda_{33}=\lambda_{34}\,,\, \lambda_{41}=\lambda_{42}\,,\, \lambda_{50}.
\end{equation}
In all the other cases, the disk is not a minimizer.
\end{theorem}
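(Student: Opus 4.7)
The plan is to show that the disk is a critical point of $\lambda_h$ among diameter-preserving deformations, and then to analyse the sign of the second-order variation mode by mode. First, I would parameterize convex perturbations of the disk $B_R$ by their support function $h_t(\theta)=R+t\phi(\theta)+O(t^2)$. Since $B_R$ already has constant width $2R$, the constraint $w(\theta)=h(\theta)+h(\theta+\pi)\leq 2R$ forces $\phi(\theta)+\phi(\theta+\pi)\leq 0$; to first order this means the even Fourier modes of $\phi$ must vanish, so the admissible perturbations are the constant-width ones,
\begin{equation*}
\phi(\theta)=\sum_{m\geq 1,\ m\text{ odd}}\bigl(\alpha_m\cos m\theta+\beta_m\sin m\theta\bigr),
\end{equation*}
the mode $m=1$ corresponding to a pure translation.

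Next, I would use Hadamard's formula to see that $B_R$ is a critical point for every $\lambda_h$. For a simple eigenvalue $\lambda^{(0)}_k=j_{0,k}^2/R^2$ the normal derivative of the radial eigenfunction is constant on $\partial B_R$, so the first-order variation is proportional to $\int_0^{2\pi}\phi\,d\theta=0$. For a double eigenvalue $\lambda^{(n)}_k=j_{n,k}^2/R^2$ with $n\geq 1$, one diagonalizes the $2\times 2$ matrix $M_{ij}=-\int_{\partial B_R}\partial_\nu u_i\partial_\nu u_j\,\phi\,d\sigma$ associated with the two eigenfunctions $J_n(j_{n,k}r/R)\cos(n\theta)$ and $J_n(j_{n,k}r/R)\sin(n\theta)$. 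Writing $\cos^2(n\theta)$ and $\sin^2(n\theta)$ as Fourier series shows that $M$ vanishes unless $\phi$ contains the mode $m=2n$; but $2n$ is even while $\phi$ carries only odd modes, so $M\equiv 0$.

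Third, rotational symmetry diagonalizes the second-order shape derivative in the Fourier basis, yielding
\begin{equation*}
\lambda_h(t)=\lambda_h(B_R)+t^2\!\!\sum_{m\geq 3,\ m\text{ odd}}\!\! c^{(h)}_m\bigl(\alpha_m^2+\beta_m^2\bigr)+o(t^2),
\end{equation*}
with $c^{(h)}_m$ computed from the standard second-order Hadamard identities together with the first-order shape derivative of the eigenfunction expanded in the Bessel basis. For a multiple eigenvalue the expansion splits into two analytic branches $\lambda_h^{\pm}(t)$, and $B_R$ is a weak local minimizer iff the lower branch satisfies $c^{(h)}_m\geq 0$ for every admissible $m$. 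Each coefficient is an explicit rational expression in the Bessel zeros $j_{n,k}$, $j_{n\pm m,\ell}$ and in values of $J_{n\pm m}$ that one can evaluate numerically.

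Finally, I would reduce the verification to a finite enumeration using the asymptotic spacing of Bessel zeros: for $h$ large enough one can systematically exhibit a low mode (typically $m=3$ or $m=5$) producing $c^{(h)}_m<0$, leaving only finitely many candidates. Inspection of these yields exactly the seventeen eigenvalues appearing in \eqref{listdisk}. The main obstacle is twofold: deriving cleanly the second-order expansion for multiple eigenvalues, where one must track not only the $2\times 2$ splitting matrix but also its next-order correction; and controlling the signs of the $c^{(h)}_m$ through Bessel-function identities well enough to turn the remaining check into a provably finite numerical task.
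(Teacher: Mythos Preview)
Your overall plan---parametrize admissible perturbations by odd Fourier modes of the support function, verify criticality via Hadamard, then study the sign of the second variation---is exactly the paper's strategy, and your treatment of the first-order step is correct. The genuine gap is in your third step: for a \emph{double} eigenvalue the second-order form does \emph{not} diagonalize in the Fourier basis. Writing $\lambda=j_{m,p}^2$ with $m\geq 1$, the paper obtains
\[
\frac{\lambda''(\D;V)}{2j_{m,p}^2}=\sum_{k\geq 0}P_{m,p}(k)\,|c_{2k+1}|^2 \;+\; q\sum_{\ell}R_{m,p}(\ell)\,c_{m-\ell}\,c_{m+\ell},
\]
with $|q|=1$; the second sum couples the mode $c_{m-\ell}$ to the mode $c_{m+\ell}$, so the two analytic branches are $L_1\pm|L_2|$ with $L_2$ a genuinely non-diagonal bilinear form. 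Your formula $\sum_m c_m^{(h)}(\alpha_m^2+\beta_m^2)$ therefore misses essential information, and the criterion ``$c_m^{(h)}\geq 0$ for every admissible $m$'' is neither necessary nor sufficient.

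This is not a cosmetic issue: the case $m=7$ shows that your scheme would give the wrong answer. For $(m,p)=(7,1)$ one has $P_{7,1}(k)\geq 0$ for \emph{every} $k$, so a purely diagonal analysis would declare the disk a weak local minimizer for both $\lambda_{26}$ and $\lambda_{27}$. In fact the cross term with $R_{7,1}(2)$ produces a $2\times2$ block in $(c_5,c_9)$ with negative determinant, so the lower branch $\lambda_{26}$ is \emph{not} locally minimized by the disk, while the upper branch $\lambda_{27}$ is. The same phenomenon occurs for $(m,p)=(7,2)$, explaining why $\lambda_{50}$ appears in the list but $\lambda_{49}$ does not. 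Your proposal gives no mechanism to separate the two members of a double pair.

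A secondary point: the reduction to finitely many cases in the paper is not via ``asymptotic spacing of Bessel zeros'' but via closed-form evaluation of $P_{m,p}(1)$ and $P_{m,p}(2)$ as explicit rational functions of $j_{m,p}^2$, from which one reads off that for every $m\geq 9$ at least one of them is negative regardless of $p$. The remaining pairs $(m,p)$ with $2\leq m\leq 8$ are then handled individually, with the off-diagonal terms $R_{m,p}$ entering decisively in several of them.
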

A weak local minimizer is simply a critical point for which the second derivative (of the eigenvalue) is non negative,
see the precise definition and the proof of this theorem in Section \ref{secdisk}.

Let us add a few words about the list of eigenvalues given in \eqref{listdisk}. First of all, the only simple eigenvalue which appears in this list is $\lambda_1$, for which the disk is obviously the global minimizer by the isodiametric inequality. For all the other simple eigenvalues $\lambda_h$, it is not difficult to find a
small deformation of the disk which makes $\lambda_h$ decrease. The case of double eigenvalues is much more intricate and need some precise calculations
and fine properties of the Bessel functions.
It is a little bit surprising to see that the disk is a local minimizer for a complete system of double eigenvalues as, for example, $\lambda_2,\lambda_3$.
Indeed, usually in such a case, small perturbations of a domain with a double eigenvalue make one eigenvalue increase while the other one decrease.
With the diameter constraint it is no longer the case: since the perturbations must preserve the diameter, we have a more rigid situation. As explained in
Section \ref{secexi}, the good way to imagine the possible perturbations consists in staying in the class of domains of constant width.
To conclude, our conjecture is that the list \eqref{listdisk} exactly corresponds to all the cases where the disk is the global minimizer.
This conjecture is supported by the numerical results that we present in Section \ref{secnum}. We perform simulations for $h\leq 20$ and we display the minimizer in the cases where the optimal shape is not a disk. In our computations, the minimizer is a body of constant width which seems regular. The Reuleaux triangle (or other Reuleaux polygons)
does not appear here. Actually, the Reuleaux triangle seems to correspond to a {\it maximizer} of the first eigenvalue, as we will explain in a work in progress.

\section{Existence, optimality conditions}\label{secexi}
\subsection{Existence}
We recall that we are interested in open sets of given diameter which minimize the $k$-th eigenvalue of the Laplace operator with
Dirichlet boundary conditions. First of all we prove existence and first properties of minimizers.
\begin{theorem}\label{existheo}
For any integer $h\geq 1$, the problem \eqref{minD} has a solution. This one is convex and is a body of constant width.
For $h=1$, the solution is the ball.
\end{theorem}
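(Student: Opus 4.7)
The plan is to build a minimizing sequence of \emph{convex} sets, extract a Hausdorff limit via Blaschke selection, and then use an enlargement argument to upgrade the minimizer from convex to constant width.

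For existence, I would start from a minimizing sequence $(\Omega_n)$ with $D(\Omega_n)\le D_0$. Replacing each $\Omega_n$ by its convex hull leaves the diameter unchanged but enlarges the set, so by domain monotonicity of the Dirichlet eigenvalues the value of $\lambda_h$ does not increase; we may thus assume every $\Omega_n$ is convex. After translating so that all $\Omega_n$ lie in a fixed ball of radius $D_0$, the Blaschke selection theorem produces a subsequence converging in Hausdorff distance to a convex set $\Omega^*$. The diameter is continuous under Hausdorff convergence, so $D(\Omega^*)\le D_0$. The only subtlety is continuity of $\lambda_h$: one needs to exclude the degenerate case where $\Omega^*$ has empty interior. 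This is ruled out because the Faber--Krahn inequality forces $|\Omega_n|$ to stay bounded below along a minimizing sequence; for convex sets, a uniform lower bound on volume together with a uniform upper bound on diameter gives Hausdorff convergence to a set with non-empty interior, and in this regime Hausdorff convergence of convex sets implies $\gamma$-convergence, hence $\lambda_h(\Omega_n)\to \lambda_h(\Omega^*)$. This yields a convex minimizer.

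To promote convexity to constant width, I invoke the classical completion theorem of P\'al: every convex body of diameter $D_0$ is contained in a (not necessarily unique) convex body of constant width $D_0$. Apply this to our minimizer $\Omega^*$ to obtain $\widetilde\Omega\supset\Omega^*$ with $D(\widetilde\Omega)=D_0$ and constant width. If $\Omega^*$ were not already of constant width, the inclusion would be strict on an open set, and strict monotonicity of $\lambda_h$ (valid for Dirichlet eigenvalues on bounded open sets) would give $\lambda_h(\widetilde\Omega)<\lambda_h(\Omega^*)$, contradicting minimality. Hence $\Omega^*$ is itself of constant width. The delicate point here is the strict monotonicity of $\lambda_h$ (as opposed to plain monotonicity), which one can justify because the enlargement adds mass in the complement of the support of the first $h$ eigenfunctions on $\Omega^*$, or more simply via capacity arguments.

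Finally, for $h=1$ the result follows by combining two classical inequalities. Let $\Omega$ have diameter $D_0$ and let $B$ be the ball of diameter $D_0$. The isodiametric (Bieberbach) inequality gives $|\Omega|\le |B|$, so the ball $B'$ with $|B'|=|\Omega|$ has radius no larger than that of $B$. By the Faber--Krahn inequality $\lambda_1(\Omega)\ge \lambda_1(B')$, and by the scaling $\lambda_1(tB)=t^{-2}\lambda_1(B)$ of the first Dirichlet eigenvalue, $\lambda_1(B')\ge \lambda_1(B)$. Chaining these gives $\lambda_1(\Omega)\ge \lambda_1(B)$, with equality only for the ball. The main obstacle in the whole argument is the continuity step in paragraph one, which is where one really uses convexity together with the volume lower bound coming from Faber--Krahn.
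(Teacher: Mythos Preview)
Your argument is correct and, for the existence step and the case $h=1$, essentially identical to the paper's: convex hull, Blaschke selection, $\gamma$-convergence of convex domains for the continuity of $\lambda_h$, and isodiametric plus Faber--Krahn for the ball. Your added care about ruling out a degenerate limit via a Faber--Krahn volume lower bound is a nice touch that the paper leaves implicit.

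The one genuine difference is in how you obtain the constant-width property. You invoke P\'al's completion theorem to embed the minimizer in a body of constant width $D_0$ and then appeal to strict monotonicity of $\lambda_h$. The paper instead argues directly: if some width is $<D_0-\delta$, then by continuity of the width function this holds on a neighborhood of directions, so one can slightly enlarge the domain in those directions without increasing the diameter, contradicting minimality. Both routes are valid; the paper's is more elementary (no black-box completion theorem) and self-contained, while yours is cleaner once P\'al's theorem is granted. Your justification of strict monotonicity should simply say that the strict inclusion of convex bodies makes $\widetilde\Omega\setminus\overline{\Omega^*}$ of positive measure (hence positive capacity); the remark about ``support of the first $h$ eigenfunctions'' is not the right mechanism, since those eigenfunctions are supported on all of $\Omega^*$.
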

\begin{proof}
Let $\Omega$ be any bounded open set and $\widetilde{\O}$ its convex hull. Since $\O$ and $\widetilde{\O}$ have the same
diameter and $\lambda_h(\widetilde{\O})\leq \lambda_h(\O)$, we can restrict ourselves to the class of convex domains.
If $\O_n$ is a sequence of convex domains of diameter less than $D_0$, we can extract a subsequence which converges
for the Hausdorff distance (by Blaschke selection theorem) to some convex set $\O$ whose diameter is less than $D_0$, because the diameter is lower semi-continuous for the Hausdorff convergence (and actually continuous in the subclass of convex
domains). Moreover, the sequence $\O_n$ $\gamma$-converges to $\O$ (see e.g. \cite[Theorem 2.3.17]{H1}) and therefore
$\lambda_h(\O_n)\to \lambda_h(\O)$. This proves that $\O$ is a minimizer.\\
Let us now assume that $\O$ is not a body of constant width. It means that there is a direction $\xi$ for which the width
of $\O$ in this direction is less than $D_0-\delta$ for some $\delta>0$. By continuity, the width of $\O$ will be less than
$D_0-\delta/2$ for all directions in a neighborhood of $\xi$ on the unit sphere. Therefore we can slightly enlarge $\O$
in all the corresponding directions without changing the diameter, contradicting the minimality of $\O$.\\
At last, let us consider the case $h=1$. For any bounded open set $\O$ of diameter $D_0$, let us introduce $B_0$ the ball of same diameter
and $B^*$ the ball of same volume. The isodiametric inequality states that $|\O|=|B^*| \leq |B_0|$ and therefore $\lambda_1(B^*)\geq \lambda_1(B_0)$, while Faber-Krahn inequality implies $\lambda_1(B^*)\leq \lambda_1(\O)$, and the result follows.
\end{proof}

\begin{remark}
It is possible to give a different proof for the optimality of the ball when $h=1$. In the work of Colesanti \cite{colesanti} it is proved that the first eigenvalue of the Dirichlet-Laplace operator satisfies the following Brunn-Minkowski type inequality: given $K_0,K_1$ two convex bodies in $\Bbb{R}^N$ and $t \in [0,1]$ we have
\begin{equation}
 \lambda_1((1-t)K_0 + tK_1)^{-1/2} \geq (1-t) \lambda_1(K_0)^{-1/2} + t\lambda_1(K_1)^{-1/2}.
\label{BM}
 \end{equation}
Moreover, if equality holds then $K_0$ and $K_1$ are homothetic. 

Now let $K$ be a solution of problem \eqref{minD}, which exists due to arguments stated above. Let $B$ be the ball of diameter $D_0$. It is standard that if $K$ has constant width $D_0$ then $K+(-K)$ is a ball and moreover $1/2K+1/2(-K)  = B$ since convex combinations of bodies of constant width have the same constant width (see for example \cite{spheroforms}). Applying inequality \eqref{BM} we get
\[ \lambda_1(B)^{-1/2} = \lambda_1(1/2K+(1/2)(-K)) ^{-1/2} \geq 0.5 \lambda_1(K)^{-1/2}+0.5 \lambda_1(-K)^{-1/2} = \lambda_1(K)^{-1/2},\]
therefore $\lambda_1(B) \leq \lambda_1(K)$. Moreover we must have equality in \eqref{BM} so $B$ and $K$ are homothetic. This gives another proof that the only solution to problem \eqref{minD} is the ball when $h=1$.
\end{remark}

\subsection{Optimality conditions}\label{paroc}
In this section we consider optimal domains in the plane for sake of simplicity, but the result extends without
difficulty to higher dimension.
It is not so easy to write optimality conditions, since the diameter constraint is very rigid: many deformations of a domain $\O$
of constant width will increase its diameter. Our idea is that the good point of view is to make suitable perturbations of
the support function. Let $f$ denote the support function of the convex set $\O$ seen as a (periodic) function of the angle 
$\theta\in [0,2\pi)$, see \cite{Sch} for definition and properties of the support function. 
Here $\theta$ denotes the angle of the unit exterior vector orthogonal to a support line and for a strictly convex set,
to each $\theta$ corresponds a unique point on the boundary that we will denote $M(\theta)$.
It is well known that $f$ must
satisfy
\begin{equation}\label{supp1}
    f''+f\geq 0 \quad \mbox{in the sense of distributions}\,,
\end{equation}
and, conversely, any non negative function satisfying \eqref{supp1} is the support function of a convex body. Moreover, bodies of constant width $D_0$
are characterized by
\begin{equation}\label{supp2}
    \forall \theta,\ f(\theta)+f(\theta+\pi) =D_0.
\end{equation}
We want to perform perturbations of the optimal domain which preserve the diameter. For that purpose, we consider
perturbations of the kind $f_\ep=f+\ep \phi$ with $\phi$ satisfying
\begin{equation}\label{supp3}
    \forall \theta,\ \phi(\theta)+\phi(\theta+\pi) =0.
\end{equation}
In full generality, we should also consider perturbation which preserve the convexity relation \eqref{supp1}
but, for simplicity we will consider here an optimal domain which is $C^2$ regular: this implies in particular
that its support function satisfies 
\begin{equation}\label{supp4}
    \forall \theta,\ f''(\theta)+f(\theta) \geq \alpha_0>0\,,
\end{equation}
where $\alpha_0$ is the infimum of the radius of curvature. We will denote by $R(\theta)=f''(\theta)+f(\theta)$
the radius of curvature at the point of parameter $\theta$. For a domain of constant width, because of relations
\eqref{supp2} and \eqref{supp3}, it is always less than $D_0$.

Thanks to \eqref{supp4}, any perturbation $f_\ep$
is admissible for $\ep$ small enough. On the numerical simulations shown in Section \ref{secnum}, we can observe
two different properties of the minimizers:
\begin{itemize}
\item the optimal domain seems to be $C^2$ regular (and the radius of curvature is far from zero),
\item the eigenvalue associated to an optimal domain is sometimes simple, sometimes double. More precisely,
the pattern is the following: if the index $h$ corresponds to a simple eigenvalue of the disk, then
the eigenvalue of the optimal domain (which is not the disk, except for $h=1$, see Section \ref{secdisk})
is also simple. If the index corresponds to a pair of double eigenvalues for the disk 
$\lambda_h=\lambda_{h+1}$, then the eigenvalue of the optimal domain is simple for $h$ and double for $h+1$.
\end{itemize}
We can now present the optimality condition satisfied by a regular optimal domain in the case of a simple
eigenvalue:
\begin{theorem}
Let $\Omega$ be a regular minimizer for problem \eqref{minD} in the plane. Let us assume that the corresponding
eigenvalue $\lambda_h(\Omega)$ is simple and let us denote by $u_h$ the corresponding (normalized)
eigenfunction. Then, it satisfies
\begin{equation}\label{op1}
\forall \theta,\  |\nabla u_h(M(\theta))|^2 R(\theta) = |\nabla u_h(M(\theta+\pi))|^2 R(\theta+\pi).
\end{equation}
\end{theorem}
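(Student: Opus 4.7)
The plan is to combine the classical Hadamard formula for a simple Dirichlet eigenvalue with the specific parametrization of convex sets by their support function, then exploit the antisymmetry condition \eqref{supp3} that encodes the diameter constraint.

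First, I would set up the admissible perturbations. Since $\Omega$ is assumed $C^2$ with $R(\theta) \geq \alpha_0 > 0$, the estimate \eqref{supp4} guarantees that for any smooth $\phi$ satisfying $\phi(\theta)+\phi(\theta+\pi)=0$ and any $\varepsilon$ small enough, $f_\varepsilon = f+\varepsilon\phi$ still satisfies $f_\varepsilon''+f_\varepsilon > 0$, so it is the support function of a smooth convex set $\Omega_\varepsilon$. By \eqref{supp2}, $\Omega_\varepsilon$ is of constant width $D_0$, so it is an admissible competitor in \eqref{minD}.

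Next, I would translate the perturbation into a boundary deformation. The boundary parametrization $M(\theta)=f(\theta)(\cos\theta,\sin\theta)+f'(\theta)(-\sin\theta,\cos\theta)$ yields, by differentiation in $\varepsilon$ at $\varepsilon=0$,
\[
V(\theta):=\frac{\partial M_\varepsilon}{\partial\varepsilon}\Big|_{\varepsilon=0}=\phi(\theta)(\cos\theta,\sin\theta)+\phi'(\theta)(-\sin\theta,\cos\theta).
\]
Since the unit outward normal at $M(\theta)$ is $\nu(\theta)=(\cos\theta,\sin\theta)$, the normal velocity is $V\cdot\nu=\phi(\theta)$. Recalling the arclength element $ds=R(\theta)\,d\theta$, the Hadamard formula for the simple eigenvalue $\lambda_h$ gives
\[
\frac{d}{d\varepsilon}\lambda_h(\Omega_\varepsilon)\Big|_{\varepsilon=0}=-\int_{\partial\Omega}|\nabla u_h|^2\,(V\cdot\nu)\,ds=-\int_0^{2\pi}|\nabla u_h(M(\theta))|^2\,\phi(\theta)\,R(\theta)\,d\theta.
\]

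Now I would use optimality. Since $\Omega$ minimizes $\lambda_h$ and $\Omega_\varepsilon$ is admissible for both signs of $\varepsilon$, the derivative vanishes:
\[
\int_0^{2\pi}|\nabla u_h(M(\theta))|^2\,R(\theta)\,\phi(\theta)\,d\theta=0
\]
for every smooth $\phi$ satisfying $\phi(\theta+\pi)=-\phi(\theta)$. Splitting the integral at $\pi$, substituting $\theta\mapsto\theta+\pi$ in the second piece, and using the antisymmetry $\phi(\theta+\pi)=-\phi(\theta)$, I obtain
\[
\int_0^{\pi}\Bigl[|\nabla u_h(M(\theta))|^2 R(\theta)-|\nabla u_h(M(\theta+\pi))|^2 R(\theta+\pi)\Bigr]\phi(\theta)\,d\theta=0.
\]
Since any smooth $\phi_0$ on $[0,\pi]$ extends (by setting $\phi(\theta+\pi):=-\phi_0(\theta)$) to an admissible perturbation, the bracket must vanish identically on $[0,\pi]$, which is \eqref{op1}.

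The step that requires the most care is the passage from the support-function perturbation to the normal boundary velocity, including the identification $ds=R\,d\theta$; everything else is a direct application of the Hadamard formula together with a Fourier-type folding argument. No genuine obstacle arises once the regularity hypothesis $R\geq\alpha_0>0$ is used to ensure admissibility of arbitrary antisymmetric $\phi$.
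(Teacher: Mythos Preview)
Your proof is correct and follows the paper's route: perturb the support function, apply Hadamard's formula with $V\cdot n=\phi$ and $ds=R\,d\theta$, and use that $\pm\phi$ are both admissible to force the first variation to vanish. The only cosmetic difference is the concluding step---the paper reads the orthogonality in Fourier language (the antisymmetric $\phi$ have only odd Fourier modes, so the integrand must have only even ones), whereas you fold via the substitution $\theta\mapsto\theta+\pi$; for your extension claim it is cleanest to take $\phi_0\in C_c^\infty(0,\pi)$ so that the extended $\phi$ is genuinely smooth and $2\pi$-periodic.
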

\begin{proof}
Let us denote by $f$ the support function of the optimal domain $\Omega$.
As explained above, we consider
perturbations of the support function $f$ of the kind $f_\ep=f+\ep \phi$ with $\phi$ satisfying
$$
    \forall \theta,\ \phi(\theta)+\phi(\theta+\pi) =0.
$$
In such a way, since $h_\ep$ satisfies
$$f_\ep''+f_\ep \geq 0, \quad f_\ep(\theta +\pi)+f_\ep(\theta)=D_0$$
it is the support function of a convex domain of constant width $D_0$. The perturbation $\phi$
defines a deformation field $V$ whose normal component on the boundary of $\Omega$ is $V.n=\phi(\theta)$.
Since the shape derivative of a simple eigenfunction is given by the Hadamard formulae, see e.g. \cite{HP},
\begin{equation}\label{hadamard}
\lambda_h'(\Omega;V)=-\int_{\partial\Omega} |\nabla u_h|^2 V.n ds,
\end{equation} 
with 
\begin{equation}\label{ds}
ds=(f''+f)(\theta) d\theta =R(\theta) d\theta\,,
\end{equation}
the optimality condition reads
\begin{equation}\label{op2}
\forall \phi \mbox{ satisfying \eqref{supp3} }, \int_{-\pi}^\pi |\nabla u_h|^2 R(\theta) \phi(\theta)\,d\theta
\geq 0.
\end{equation}
Obviously, if $\phi$ satisfies \eqref{supp3}, its opposite $-\phi$ also then \eqref{op2} implies
\begin{equation}\label{op3}
\forall \phi \mbox{ satisfying \eqref{supp3} }, \int_{-\pi}^\pi |\nabla u_h|^2 R(\theta) \phi(\theta)\,d\theta
= 0.
\end{equation}
Condition \eqref{supp3} means that $\phi$ is odd in the sense that its Fourier expansion only contains
odd indexes $c_{2k+1}$. Thus, the orthogonality condition \eqref{op3} means that the function 
$|\nabla u_h|^2 R(\theta)$ is even: its Fourier expansion only contains even indexes $c_{2k}$.
In other terms, it satisfies \eqref{op1}.
\end{proof}

\begin{remark}
This optimality condition \eqref{op1} is not standard since it is not local, in contrast with problem \eqref{lb302}, where it would write $|\nabla u_h|^2=constant$ (if the corresponding eigenvalue was simple) or problem \eqref{perimlk}, where it writes $|\nabla u_h|^2=constant*curvature$ (if the corresponding eigenvalue is simple). Here the optimality condition takes into account two diametric opposite points and relate their curvature with the gradient of the eigenfunction. Let us observe that for the disk, condition \eqref{op1} is always satisfied even for an 
eigenfunction corresponding to a double eigenvalue. This confirms Proposition \ref{critical1} and \ref{critical2} which claim that
the disk is always a critical point.
\end{remark}

\section{Local minimality of the disk}\label{secdisk}

We start by recalling a few standard definitions and fix some notations.

Given a bounded open set $\Omega$ of $\mathbb R^2$ and a smooth vector field $V:\re^2\to \re^2$, the first and second order shape derivatives of $\lambda_h$ at $\Omega$ in direction $V$ are given by the following limits (if they exist):
\begin{eqnarray}
& &\lambda_h'(\Omega;V) :=\displaystyle{\lim_{\e \to 0} \frac{\lambda(\Omega_\e) - \lambda(\Omega)}{\e}\,,}
\\
& &\lambda_h''(\Omega;V)  :=\displaystyle{\lim_{\e \to 0} 2 \, \frac{\lambda(\Omega_\e) - \lambda(\Omega) - \e \lambda'(\Omega,V)}{\e^2}\,,}
\end{eqnarray}
where $\Omega_\e$ is the deformed set $\Omega_\e:=\{ x + \e V(x)\ :\ x\in \Omega\}$.

Here we focus our attention to a particular subclass of sets, that of constant width sets. Accordingly, given $\Omega$ with constant width, we consider only deformation fields $V:\mathbb R^2\to \mathbb R^2$ such that, for every $\e$ small enough, the set $\Omega_\e$ has still constant width.
As already pointed out in \S \ref{paroc}, we can associate to such a $V$ a function $\phi$ defined on $[0,2\pi)$, such that the support function $f_\e$ of $\Omega_\e$ is 
\begin{equation}\label{support1}
f_\e(\theta)=f(\theta) + \e \phi(\theta)\,,
\end{equation}
and $\phi(\theta) + \phi(\theta + \pi)=0$. In particular, the Fourier series of $\phi$ is of the form
\begin{equation}\label{support2}
\phi(\theta)= \sum_{k\geq 0} [ a_{2k+1} \cos((2k+1)\theta) + b_{2k+1}\sin((2k+1)\theta)]=  \sum_{k\in \mathbb Z} c_{2k+1}e^{i(2k+1)\theta}\,,
\end{equation}
with $a_\ell,b_\ell\in \mathbb R$, $c_\ell= (a_\ell - i b_\ell)/2$, and $c_{-\ell}=\overline{c_{\ell}}$  for $\ell \geq 0$ .

\begin{definition}
Let $\Omega$ be a set of constant width. We say that $V:\mathbb R^2 \to \mathbb R^2$ is \textit{admissible } if, for every $\e$ small enough, the set $\Omega_\e:=\{x + \e V(x)\ :\ x\in \Omega\}$ has still constant width. Moreover, we will disregard translations, thus we will always take $c_1=c_{-1}=0$.
We say that $\Omega$ is a {\it critical shape} for $\lambda_h$ if the first order shape derivative vanishes for every admissible deformation, and we say that a critical shape is a {\it weak local minimizer} for $\lambda_h$ if the second order shape derivative is non negative for every admissible deformation. 
\end{definition}

In this section we prove that the disk $\D$ is a critical shape in the class of constant width sets (see Propositions \ref{critical1} and \ref{critical2}) and we determine the sign of the second order shape derivative, characterizing the $h$s for which the disk is/is not a weak local minimizer in the class (see Theorems \ref{wlm1} and \ref{wlm2}). These results imply the main Theorem \ref{disks}.

\medskip

Throughout the paper we will use the following representation of the eigenvalues of the disk: for every $h\in \mathbb N$, $\lambda_h=j_{m,p}^2$, for some $m\geq 0$ and $p\geq 1$, being $j_{m,p}$ the $p$-th zero of the $m$-th Bessel function $J_m$. For the benefit of the reader, we recall in the two tables below the values of these objects for small values of $h$, $m$, and $p$.

\medskip

\begin{center}
\bgroup
\def\arraystretch{1.5}
\begin{tabular}{|c|c|c|c|c|c|}
\hline
{$\boldsymbol{m}$ \ {\bf \textbackslash }\ $\boldsymbol p$ } 
& $\boldsymbol 1$ & $\boldsymbol 2$ & $\boldsymbol 3$ & $\boldsymbol 4$ & $\boldsymbol 5$ 
\\
\hline
$\boldsymbol 0$ & $2.4048$  & $5.5201$ & $8.6537$ & $11.7915$ & $ 14.9309$
\\
\hline
 $\boldsymbol 1$ & $3.8317$ &   $7.0156$   &  $10.1735$ & $13.3237$ & $16.4706$ 
\\
\hline
 $\boldsymbol 2$ & $5.1356$ & $8.4172$ & $11.6198$ & $14.7960$ & $21.1170$
\\
\hline
 $\boldsymbol 3$ & $6.3802$ & $9.7610$ & $13.0152$ &  $16.2235$ & $19.4094$
\\
\hline
 $\boldsymbol 4$  & $7.5883$ & $11.0647$ & $14.3725$ & $17.6160$ & $20.8269$
\\
\hline
 $\boldsymbol 5$  & $8.7715$ & $12.3386$ & $15.7002$ & $18.9801$ & $22.2178$
\\
\hline
 $\boldsymbol 6$ & $9.9361$ & $13.5893$ & $17.0038$ & $20.3208$ & $23.5861$
\\
\hline
 $\boldsymbol 7$ & $11.0864$ & $14.8213$ & $18.2876$ & $21.6415$ & $24.9349$
\\
\hline
 $\boldsymbol 8$ &  $12.2251$ & $16.0378$ & $19.5545$ & $22.9452$ & $26.2668$ 
 \\
 \hline
$\boldsymbol 9$ &  $13.3543$ & $17.2412$ & $20.8070$ & $24.2339$ & $25.5837$ 
 \\
 \hline
\end{tabular}
\egroup
\medskip
\captionof{table}{Some numerical computations of $j_{m,p}$.} \label{table1}
\end{center}

\begin{center}
\bgroup
\def\arraystretch{1.5}
\begin{tabular}{|c|c|c|}
\hline
 $\lambda_1 = j_{0,1}^2$ &  $\lambda_{15}= j_{0,3}^2$ &   $\lambda_{30}= j_{0,4}^2$
\\
\hline
 $\lambda_2 = \lambda_3= j_{1,1}^2$ &  $\lambda_{16}= \lambda_{17}= j_{5,1}^2$ &   $\lambda_{31} = \lambda_{32}= j_{8,1}^2$\\
\hline
 $\lambda_4 = \lambda_5= j_{2,1}^2$ &  $\lambda_{18} = \lambda_{19}= j_{3,2}^2$ &   $\lambda_{33} = \lambda_{34}= j_{5,2}^2$\\
\hline
 $\lambda_6 = j_{0,2}^2$ &  $\lambda_{20} = \lambda_{21}= j_{6,1}^2$ &   $\lambda_{35} = \lambda_{36}= j_{3,3}^2$\\
\hline
 $\lambda_7 = \lambda_8= j_{3,1}^2$ &  $\lambda_{22} = \lambda_{23}= j_{1,3}^2$ &   $\lambda_{37} = \lambda_{38}= j_{1,4}^2$\\
\hline
 $\lambda_9 = \lambda_{10}= j_{1,2}^2$ &  $\lambda_{24} = \lambda_{25}= j_{4,2}^2$ &   $\lambda_{39} = \lambda_{40}= j_{9,1}^2$\\
\hline
 $\lambda_{11} = \lambda_{12}= j_{4,1}^2$ &  $\lambda_{26} = \lambda_{27}= j_{7,1}^2$ &   $\lambda_{41} = \lambda_{42}= j_{6,2}^2$\\
\hline
 $\lambda_{13} = \lambda_{14}= j_{2,2}^2$ &  $\lambda_{28} = \lambda_{29}= j_{2,3}^2$ &   $\lambda_{43} = \lambda_{44}= j_{4,3}^2$\\
\hline
\end{tabular}
\egroup
\medskip
\captionof{table}{Representation $\lambda_h= j_{m,p}^2$ for $h\in \{1,\ldots,44\}$.} \label{table2}
\end{center}

\subsection{Simple eigenvalues of the disk}
We begin with the case of simple eigenvalues, corresponding to $\lambda_h=j_{0,p}^2$, for $p\geq 1$. For brevity, in the following, $\lambda_h$ will be denoted by $\lambda$ and the corresponding eigenfunction $u_h$ simply by $u$.

\bigskip

We start by recalling some results on shape derivatives (cf. \cite[Chapter 5]{HP}). Let 
$l_1:C^\infty(\partial \D)\to \mathbb R$ and $l_2:C^\infty(\partial \D)\times C^\infty(\partial \D)\to \mathbb R$ be the following linear form and bilinear form, respectively:
\begin{align}
 & \displaystyle{l_1(\varphi) = -\int_{\partial \D} |\nabla u|^2 \varphi \,,}\notag
\\
& \displaystyle{ l_2(\varphi, \psi) = \int_{\partial \D} \left[ 2 w_\varphi \frac{\partial w_\psi}{\partial n} + \varphi \psi \left( \frac{\partial u}{\partial n}\right)^2\right]\,,}\label{l2}
\end{align}
where $w_\varphi$ (and, similarly, $w_\psi$) solves
\begin{equation}\label{def-w}
\left\{\begin{array}{lll}
-\Delta w_\varphi= \lambda w_\varphi \quad &\hbox{in }\D
\\
w_\varphi= -  \varphi \frac{\partial u}{\partial n} \quad &\hbox{on }\partial \D
\\
\int_{\D}u w_\varphi = 0\,.
\end{array}
\right.
\end{equation}
Then
\begin{align}
\lambda'(\D;V) & = 
l_1(V\cdot n)\,, \label{l'}
\\
\lambda''(\D;V) & = 
l_2(V\cdot n, V\cdot n) +  l_1( Z)\,,\label{l''}
\end{align}
with 
\begin{equation}\label{def-Z}
Z:=(D_{\tau} n\, V_\tau )\cdot V_\tau - 2 \nabla_\tau (V\cdot n)\cdot V_\tau\,,
\end{equation}
being $V_\tau = V - (V\cdot n)n$, $\nabla_\tau \phi = \nabla \phi - (\partial_n \phi) n$, $D_\tau n = D n - (Dn \,n)\otimes n$.

In case of $V=(V_1(\theta), V_2(\theta))$, since on $\partial \D$ we have $n=(\cos\theta, \sin \theta)$ and $\tau = (-\sin \theta, \cos \theta)$, the function $Z$ defined in \eqref{def-Z} reads
$$
Z= - (V\cdot \tau)^2 - 2  (V'\cdot n) (V\cdot \tau)\,,
$$
so that
\begin{equation}\label{l1(Z)}
l_1(Z) =  \frac{j_{0,p}^2}{\pi}  \int_{0}^{2\pi} [ (V\cdot \tau)^2 + 2  (V'\cdot n) (V\cdot \tau)]\,\mathrm d\theta\,.
\end{equation}

Note that if we perform a translation with a constant vector field $V=(a;b)$, we get
$$
l_1(V\cdot n) = 0 \,,\quad l_2(V\cdot n, V\cdot n) =  - j_{0,p}^2 (a^2 + b^2)\,,
$$
$$
l_1(Z) =  \frac{j_{0,p}^2}{\pi} \int_{0}^{2\pi} ( - a \sin \theta + b \cos \theta)^2\mathrm d\theta = j_{0,p}^2 (a^2 + b^2)\,,
$$
so that the first and second order shape derivatives, as expected,  vanish.

Considering more general deformations, we obtain the following

\begin{proposition}\label{critical1}
The disk is a critical shape of $\lambda$, for every $\lambda$ simple, in the class of constant width sets.
\end{proposition}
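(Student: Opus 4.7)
My plan is to apply the first-order shape derivative formula \eqref{l'}, namely $\lambda'(\D;V) = l_1(V\cdot n) = -\int_{\partial \D} |\nabla u|^2 (V\cdot n)\,ds$, and to show that this integral vanishes for every admissible field $V$ when the eigenfunction $u$ is radial. The two ingredients I would combine are (i) the radial symmetry of eigenfunctions associated with simple eigenvalues of the disk and (ii) the fact that the Fourier series of $V\cdot n$, viewed as a function of the angle $\theta$, contains only odd-indexed modes.

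First, I would note that the simple eigenvalues of $\D$ are exactly those of the form $\lambda = j_{0,p}^2$, with $p\geq 1$ (see Table \ref{table2}): indeed, eigenvalues $j_{m,p}^2$ with $m\geq 1$ carry a two-dimensional eigenspace spanned by $J_m(j_{m,p}r/R)\cos(m\theta)$ and $J_m(j_{m,p}r/R)\sin(m\theta)$, so simplicity forces $m=0$. Up to normalisation the eigenfunction then reads $u(r)=c\,J_0(j_{0,p}r/R)$, which is radial. Consequently $|\nabla u|^2$ is constant on $\partial \D$; call this constant $K>0$.

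Second, I would use the description of admissible perturbations recalled just before the proposition. Writing $V\cdot n = \phi(\theta)$ on $\partial \D$, with $\phi$ satisfying $\phi(\theta)+\phi(\theta+\pi)=0$, the Fourier expansion \eqref{support2} shows that $\phi$ contains only odd frequencies; in particular its constant Fourier coefficient vanishes, so $\int_0^{2\pi}\phi(\theta)\,d\theta = 0$. Parameterising $\partial \D$ by $\theta$, we have $ds = R\,d\theta$, hence
\begin{equation*}
\lambda'(\D;V) = -\int_{\partial \D}|\nabla u|^2 (V\cdot n)\,ds = -K R \int_0^{2\pi}\phi(\theta)\,d\theta = 0.
\end{equation*}
Since this holds for every admissible $V$, the disk is a critical shape.

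There is essentially no obstacle here beyond the two observations above; the statement is really the clean counterpart, for the diameter constraint, of the classical fact that a uniform boundary gradient together with a zero-mean normal displacement kills the Hadamard formula. The only care required is to note that the $c_1=c_{-1}=0$ normalisation (removing translations) plays no role in this first-order computation, whereas it will become essential when analysing the second derivative in the subsequent theorems.
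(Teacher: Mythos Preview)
Your proof is correct and follows essentially the same approach as the paper: both arguments combine the radial symmetry of the eigenfunction (so that $|\nabla u|^2$ is constant on $\partial\D$) with the fact that $V\cdot n=\phi$ has zero mean on $\partial\D$, making the Hadamard integral vanish. Your version is in fact slightly more explicit than the paper's, since you spell out why simple eigenvalues force $m=0$ and hence radiality, whereas the paper simply asserts that $|\nabla u|$ is constant on the boundary.
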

\begin{proof}
Let $V$ be an admissible deformation field, namely such such that $\Omega_\e=\D + \e V(\D)$ has constant width. On the boundary $\partial \Omega_\e = (I + \e V)(\partial\D)$, we have
\begin{equation}\label{def-V}
V(\theta):=\left\{\begin{array}{lll}
\phi(\theta) \cos\theta - \phi'(\theta)\sin\theta
\\
\phi(\theta) \sin\theta + \phi'(\theta)\cos\theta\,,
\end{array}
\right.
\end{equation}
where $\phi$ is associated to $V$ as in \eqref{support1}. 

According to \eqref{l'}, since $|\nabla u|$ is constant on $\partial \D$, $V\cdot n  = \phi(\theta)$, and by \eqref{support2} $\phi$ has zero average, we infer that $\lambda'(\D;V)=0$, concluding the proof.
\end{proof}

Let us now consider the second order shape derivative. According to \eqref{l''}, we need to compute $l_2(V\cdot n, V\cdot n)$ and $l_1(Z)$, with $Z$ defined in \eqref{def-Z}. In this case, system \eqref{def-w} reduces to
\begin{equation}\label{3-conditions}
\left\{\begin{array}{lll}
-\Delta w = \lambda w \quad &\hbox{in }\D
\\
w= - \frac{\partial u}{\partial n} \phi \quad &\hbox{on }\partial \D
\\
\int_{\D}u w = 0\,.
\end{array}
\right.
\end{equation}
Consider the basis $\{J_n(j_{0,p}\rho) \cos(n\theta); J_n(j_{0,p}\rho )  \sin(n\theta)\}_{n}$ of eigenfunctions of $\lambda$ in polar coordinates $(\rho;\theta)$: we look for $w$ of the form
$$
w(\rho, \theta) = \sum_{n\geq 1} [A_n \cos(n\theta) + B_n \sin (n\theta)] J_n(j_{0,p} \rho)\,.
$$
In view of the basis chosen, the PDE in \eqref{3-conditions} is readily satisfied. Also the third condition of the zero average in \eqref{3-conditions} follows from the radial symmetry of $u(\rho, \theta) = J_0(j_{0,p}\rho) / (\sqrt{\pi} |J_0'(j_{0,p})|)$.
Imposing the boundary condition we get
$$
A_n=\left\{\begin{array}{lll}\displaystyle{
-\frac{\mathrm{sign}(J_0'(j_{0,p})) j_{0,p}}{ J_{n}(j_{0,p})\sqrt{\pi}} a_{n} }\quad & \hbox{if }n=2k+1\,,\ k\geq 0\\
0 \quad & \hbox{else}\,;
\end{array}
\right.
$$
the same equality holds for $B_n$, with $b_n$ in place of $a_n$.
Therefore, the bilinear form \eqref{l2} is given by
\begin{equation}\label{elle2}
l_2(V\cdot  n, V \cdot n)  =   j_{0,p}^2\sum_{k\geq 0}\left( 1 + 2 j_{0,p} \frac{ J_{2k+1}'(j_{0,p}) }{J_{2k+1}(j_{0,p})}  \right) (a_{2k+1}^2 + b_{2k+1}^2)\,.
\end{equation}
Since $V'\cdot n = 0$ and
$$
V\cdot \tau = \phi'(\theta) = \sum_{k\geq 0}(2k +1) [- a_{2k+1} \sin((2k+1)\theta) + b_{2k+1}\cos((2k+1)\theta)]\,,
$$
formula \eqref{l1(Z)} reads
$$
l_1(Z) = j_{0,p}^2 \sum_{k\geq 0} (2k +1)^2 (a_{2k+1}^2 +  b_{2k+1}^2)\,.
$$

We are now in position to write the second order shape derivative of $\lambda$: setting
$$
P_{N}(x):= 1 + N^2 + 2 x \frac{ J_{N}'(x) }{J_{N}(x)} 
$$
we have
\begin{equation}\label{der2}
\lambda''(\D; V) =  j_{0,p}^2\sum_{k\geq 0} P_{2k+1}(j_{0,p}) (a_{2k+1}^2 + b_{2k+1}^2)\,.
\end{equation}
In order to determine the behavior of $\lambda$ at $\D$ we need to investigate the sign of the coefficients $P_{2k+1}(j_{0,p})$. To this aim, we recall some well-known properties of the Bessel functions:
\begin{align}
& \frac{2 N J_N(x)}{x} = J_{N-1}(x) + J_{N+1}(x)\,,\label{bess1}
\\
& 2 J_N' (x) = J_{N-1} (x) - J_{N+1}(x)\,,\label{bess2}
\\
& x J_N'(x) = N J_N (x) - x J_{N+1}(x)\,,\label{bess3}
\\
& x J_N'(x) = - N J_N(x) + x J_{N-1}(x)\,.\label{bess4}
\end{align}

We are now in a position to prove the following
\begin{theorem}\label{wlm1}
The disk is not a weak local minimizer for any simple eigenvalue $\lambda\neq \lambda_1$ in the class of constant width sets.
\end{theorem}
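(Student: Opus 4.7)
The admissible perturbations $\phi$ in \eqref{support2} are supported on odd Fourier modes, and the standing convention $c_{\pm 1}=0$ (translations discarded) confines the sum \eqref{der2} to indices $N=2k+1\geq 3$. The plan is therefore to exhibit a single odd integer $N\geq 3$ for which $P_N(j_{0,p})<0$ for every $p\geq 2$: testing \eqref{der2} against the single Fourier mode $\phi(\theta)=\cos(N\theta)$ will then produce an admissible deformation with $\lambda''(\D;V)<0$, contradicting weak local minimality. The natural candidate is $N=3$.

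Using identity \eqref{bess3} in the form $xJ_N'(x)/J_N(x)=N-xJ_{N+1}(x)/J_N(x)$ one rewrites
\[
P_N(x)=(N+1)^2-2x\,\frac{J_{N+1}(x)}{J_N(x)}.
\]
Starting from $J_0(j_{0,p})=0$ and iterating the three-term recurrence \eqref{bess1} expresses $J_2(j_{0,p})$, $J_3(j_{0,p})$, $J_4(j_{0,p})$ as explicit rational multiples of $J_1(j_{0,p})$; forming the ratio $J_4/J_3$ at $x=j_{0,p}$ cancels $J_1(j_{0,p})$ and, after a short simplification, I expect the closed form
\[
P_3(j_{0,p})\;=\;\frac{32}{8-j_{0,p}^2}.
\]
From Table~\ref{table1}, $j_{0,p}^2\geq j_{0,2}^2\simeq 30.47>8$ as soon as $p\geq 2$, so $P_3(j_{0,p})<0$ for every simple eigenvalue $\lambda_h=j_{0,p}^2$ distinct from $\lambda_1$. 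Note that for $p=1$ one finds $P_3(j_{0,1})>0$, consistently with the global optimality of the disk for $\lambda_1$ established in Theorem~\ref{existheo}.

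To conclude, I would check that $\phi(\theta)=\cos(3\theta)$ defines an admissible deformation: it satisfies \eqref{supp3} and, since the disk has constant radius of curvature $1$, the perturbed support function $f+\varepsilon\phi$ still satisfies \eqref{supp4} for $\varepsilon$ small, so the corresponding field $V$ in \eqref{def-V} is admissible. Substituting this single Fourier mode into \eqref{der2} gives $\lambda''(\D;V)=j_{0,p}^2\,P_3(j_{0,p})<0$. The whole argument is a brief Bessel-identity chase and I do not expect any real obstacle; the only point deserving a bit of care is carrying out the recurrence so that the ratio $J_4/J_3$ at $j_{0,p}$ comes out as a genuinely rational function of $j_{0,p}$ with a transparent sign. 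This simplicity is in sharp contrast with the double-eigenvalue cases of Theorem~\ref{disks}, where the analogous coefficients do not all have a constant sign and a more delicate analysis of several families of Bessel ratios is needed.
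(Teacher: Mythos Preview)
Your proposal is correct and follows essentially the same approach as the paper: both arguments target the single coefficient $P_3(j_{0,p})$, derive the closed form $P_3(j_{0,p})=32/(8-j_{0,p}^2)$ via standard Bessel identities, and conclude negativity for $p\geq 2$. The only cosmetic difference is that the paper first establishes $P_1(j_{0,p})=0$ and then reaches $P_3$ through a recursion $P_{N+1}(x)=N^2+4x^2/((N+1)^2-P_N(x))$, whereas you compute the ratio $J_4/J_3$ directly from the three-term recurrence; the two routes are equivalent.
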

\begin{proof}
As we have already seen in Proposition \ref{critical1}, the disk is a critical point for this kind of deformations. Thanks to formula \eqref{der2}, it is enough to show that for every $p\geq 2$ there exists $n$ such that $P_{2n+1}(j_0,p)<0$ (the case $p=1$ corresponds to $\lambda_1$).

By combining the properties of the Bessel functions \eqref{bess1} and \eqref{bess2} with $N=1$ and $x=j_{0,p}$, and recalling that $J_0(j_{0,p})=0$, we get
$$
P_1(j_{0,p})=2+ 2 j_{0,p} \frac{ J_1'(j_{0,p}) }{J_1(j_{0,p})} =  0\,.
$$
Again exploiting \eqref{bess1}-\eqref{bess3}, it is easy to derive the recursive formula
$$
P_{N+1}(x) =  N^2  + \frac{4x^2}{(N+1)^2 -  P_N(x)}\,,
$$
which, applied twice, gives
$$
P_3(j_{0,p})= 
\frac{32}{8-j_{0,p}^2}<0
$$
for every $p\geq 2$. 
\end{proof}

\begin{remark} Note that, in view of the estimate
\begin{equation}\label{stimaN}
 x \frac{J_N'}{J_N} \geq N - \frac{2x^2}{2N + 1}\,,
\end{equation}
which is valid for $0<x\leq N+ 1/2$ (cf. \cite[Lemma 11]{Kr}), we obtain the positivity of $P_{2n+1}(j_{0,p})$ for $n$ large enough. In particular, we may find an admissible deformation $V$ such that $\lambda''(\D;V)>0$.
\end{remark}

\subsection{Double eigenvalues of the disk}

\subsubsection{First order shape derivative}
Let now $\lambda:=\lambda_h=\lambda_{h+1}=j_{m,p}^2$ be a double eigenvalue of the disk, for some $m,p\geq 1$. It is known (see, e.g., \cite[Theorem 2.5.8]{H1}) that $\e \mapsto \lambda((I+ \e V)(\D))$ has a directional derivative at $\e =0$, which is given by one of the eigenvalues of the symmetric matrix $M$ with components
$$
M_{11} = -\int_{\partial \D} \left( \frac{\partial u_h}{\partial n}\right)^2 V\cdot n \,,\  M_{12}  = -\int_{\partial \D} \left( \frac{\partial u_{h}}{\partial n}\right)\left( \frac{\partial u_{h+1}}{\partial n}\right) V\cdot n \,,\ 
M_{22} = -\int_{\partial \D}\left( \frac{\partial u_{h+1}}{\partial n}\right)^2 V\cdot n \,.
$$
Recalling the expression of the eigenfunctions
$$
u_h(\rho, \theta) = \sqrt{\frac{2}{\pi}} \frac{J_m(j_{m,p} \rho)}{|J_m'(j_{m,p})|} \cos(m \theta)\,,\quad u_{h+1}(\rho, \theta) = \sqrt{\frac{2}{\pi}} \frac{J_m(j_{m,p} \rho)}{|J_m'(j_{m,p})|} \sin(m \theta)\,,
$$
we infer that
\begin{align*}
& \left(\frac{\partial u_h}{\partial n}\right)^2 =\frac{2}{\pi}j_{m,p}^2 \cos^2(m \theta)\,,\quad \left(\frac{\partial u_{h+1}}{\partial n}\right)^2 =\frac{2}{\pi}j_{m,p}^2\sin^2(m \theta)\,,\\
& \left(\frac{\partial u_h}{\partial n}\right) \left(\frac{\partial u_{h+1}}{\partial n}\right) =\pm \frac{2}{\pi}j_{m,p}^2\cos (m\theta) \sin(m \theta)\quad \hbox{on }\partial \D\,.
\end{align*}
Hence, since $V\cdot n = \phi(\theta)$ is orthogonal to any $\cos(2m\theta)$ and $\sin(2m\theta)$
in $[-\pi,\pi]$, we conclude that the matrix $M$ is identically zero; namely
\begin{proposition}\label{critical2}
The disk is a critical shape of $\lambda$, for every $\lambda$ double, in the class of constant width sets.
\end{proposition}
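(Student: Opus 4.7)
The plan is to show that the $2\times 2$ symmetric matrix $M$ introduced just before the statement is identically zero; since the directional derivative of $\lambda$ in direction $V$ is one of the eigenvalues of $M$, this will immediately give criticality of the disk in every admissible direction.

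First, I would rewrite the three integrands using the standard double-angle identities
\[
\cos^2(m\theta)=\tfrac{1}{2}\bigl(1+\cos(2m\theta)\bigr),\quad \sin^2(m\theta)=\tfrac{1}{2}\bigl(1-\cos(2m\theta)\bigr),\quad \cos(m\theta)\sin(m\theta)=\tfrac{1}{2}\sin(2m\theta).
\]
This expresses the boundary values of $(\partial_n u_h)^2$, $(\partial_n u_{h+1})^2$ and $(\partial_n u_h)(\partial_n u_{h+1})$ as linear combinations of $1$, $\cos(2m\theta)$ and $\sin(2m\theta)$, that is, trigonometric modes of \emph{even} frequency ($0$ and $2m$).

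Next, I would use the admissibility of $V$: by \eqref{support1}--\eqref{support2} together with $V\cdot n=\phi(\theta)$ on $\partial \D$, the function $\phi$ has a Fourier expansion supported on \emph{odd} frequencies $2k+1$. Since trigonometric functions of distinct integer frequencies are $L^2([-\pi,\pi])$-orthogonal, every integral defining $M_{11}$, $M_{12}$, $M_{22}$ is a pairing of even-frequency modes against odd-frequency modes, hence vanishes. Therefore $M\equiv 0$, its two eigenvalues are both zero, and the directional derivative of $\lambda$ at $\D$ in the admissible direction $V$ is zero. As this holds for any admissible $V$, the disk is a critical shape.

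There is no real obstacle: once the double-angle rewriting and the parity structure of $\phi$ are in place, the conclusion is a one-line orthogonality argument. The only care needed is in invoking correctly the eigenfunction formulas for $u_h$, $u_{h+1}$ in polar coordinates and their boundary gradients, but these are precisely the expressions already recorded in the excerpt.
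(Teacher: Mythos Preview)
Your proposal is correct and matches the paper's own argument essentially line for line: the paper also concludes that $M$ vanishes because $V\cdot n=\phi(\theta)$, having only odd Fourier modes, is orthogonal to $\cos(2m\theta)$ and $\sin(2m\theta)$ (and, implicitly, to the constant). Your version is slightly more explicit in writing out the double-angle identities and in noting that the frequency-$0$ mode is also even, but the method is identical.
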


\subsubsection{Second order shape derivative}
Let us now perform the second order shape derivative. We consider variations $\Omega_\e$ of $\D$ with support function
\begin{equation}\label{support}
f_\e(\theta) = 1 + \e \phi(\theta) + \e^2 \psi(\theta)\,,
\end{equation}
with $\phi$ and $\psi$ such that $\phi(\theta) + \phi(\theta + \pi) = \psi(\theta) + \psi(\theta+\pi)=0$. In particular, for $\e$ small, we can parametrize the boundary $\partial \Omega_\e$ as  $(\rho(\theta,\e),\theta)$, with
$$
\rho(\theta,\e) = 1 + \e \phi(\theta) + \e^2 \left (\psi(\theta)  - \frac{(\phi'(\theta))^2}{2} \right) + o(\e^2) \,.
$$

Adapting the computations done in  \cite{Ber}  for $\lambda(\Omega_\e)|\Omega_\e|$ to our problem, exploiting the developments
$$
|\Omega_\e| = \pi + \e^2 \pi \left(\sum_{\ell=-\infty}^{+\infty} (1-\ell^2)|c_\ell|^2 \right) + o(\e^2)\quad \hbox{and}\quad 
\lambda(\Omega_\e) = j_{m,p}^2 + \frac{\e^2}{2}\lambda'' + o(\e^2)\,,
$$
we obtain the following equality (note that we are interested in the sign of the second order shape derivative):
\begin{equation}\label{derivataseconda}
\begin{split}
\frac{\lambda''(\D;V)}{2 j_{m,p}^2} =  \sum_{\ell=-\infty}^{+\infty} (\ell^2 - 1)|c_\ell|^2 + 2  \sum_{|\ell|\neq m} \left( 1 + j_{m,p} \frac{J_\ell'(j_{m,p})}{J_\ell(j_{m,p})}\right) |c_{m-\ell}|^2 +
\\ + 2 q\left[   -\sum_{\ell=-\infty}^{+\infty} \frac12 \ell(2m-\ell) c_\ell c_{2m-\ell}  +  \sum_{|\ell|\neq m} \left( \frac12 + j_{m,p} \frac{J_\ell'(j_{m,p})}{J_\ell(j_{m,p})}\right) c_{m+\ell}c_{m-\ell} \right]\,,
\end{split}
\end{equation}
where $c_\ell$ are the Fourier coefficients of $\phi$ in the exponential form (see \eqref{support2}), i.e., $c_{\ell}=a_{\ell} + i b_{\ell}$, so that $c_{-\ell}=\overline{c_{\ell}}$ and $c_{2\ell} = 0$. The coefficient $q$ is a complex number of modulus 1, and its product with the term in square brackets is real. Notice that the perturbation $\psi$ does not play any role: indeed, the only relevant term in the development would be the $2m$-th coefficient of its Fourier series, which is zero.

\begin{remark}\label{rem-qm}
The coefficient $q$ (which depends on the deformation chosen) acts as a rotation and can always take two values, one opposite to the other. In other words, the second order shape derivative at $\D$ in direction $V$ is of the form $\lambda''(\D;V) = L_1 \pm |L_2|$, for some $L_i\in \mathbb R$. In particular, for $\lambda=\lambda_h(\D)=\lambda_{h+1}(\D)$, we have
\begin{equation}\label{Tay}
\lambda_{h,h+1}(\Omega_\e) = \lambda + \frac{\e^2}{2} (L_1\pm |L_2|) + o(\e^2)\,.
\end{equation}
More precisely, since by definition the eigenvalues are ordered, the plus sign is associated to $\lambda_{h+1}(\Omega_\e)$, the minus sign to $\lambda_{h}(\Omega_\e)$.
\end{remark}

\subsubsection{Sign of $\lambda''$: the case $m=1$}

As a first computation, we consider the case $m=1$. 
Exploiting the fact that $c_{2\ell} = 0$ and $J_{-n}=(-1)^n J_n$, we get
\begin{align}
& \sum_{\ell=-\infty}^{+\infty} (\ell^2 - 1)|c_\ell|^2  =  \sum_{k\geq 0} (8 k^2 + 8 k) |c_{2k+1}|^2\,,\notag
\\
& \sum_{|\ell|\neq 1} \left( 1 + j_{1,p} \frac{J_\ell'(j_{1,p})}{J_\ell(j_{1,p})}\right) |c_{1-\ell}|^2= \sum_{k\geq 0} \left(  2 + {j_{1,p}}\left(\frac{J_{2k}'(j_{1,p})}{J_{2k}(j_{1,p})} + \frac{J_{2k+2}'(j_{1,p})}{J_{2k+2}(j_{1,p})} \right) \right) |c_{2k+1}|^2\,,\label{equality}
\\
&   -\sum_{\ell=-\infty}^{+\infty} \frac12 \ell(2-\ell) c_\ell c_{2-\ell}  = -\frac12 c_1 c_1 + \sum_{k\geq 1}(4k^2 -1)c_{1+2k}c_{1-2k}\,,\notag
\\
&  \sum_{|\ell|\neq 1} \left( \frac12 + j_{1,p} \frac{J_\ell'(j_{1,p})}{J_\ell(j_{1,p})}\right) c_{1+\ell}c_{1-\ell} = \frac12 c_1 c_1 +  \sum_{k\geq 1} \left( 1 + 2 j_{1,p} \frac{J_{2k}'(j_{1,p})}{J_{2k}(j_{1,p})}\right)c_{1+2k}c_{1-2k}\,.\notag
\end{align}
In particular, we have
\begin{equation}\label{lambdasec}
 \frac{\lambda''(\D;V)}{2 j_{1,p}^2} =   \sum_{k\geq 0} P_{1,p}(k)  |c_{2k+1}|^2 + 2 q\sum_{k\geq 1}Q_{1,p}(k) c_{1+2k}c_{1-2k}  \,,
\end{equation}
where
\begin{align}
P_{1,p}(k) &:= 8 k^2 + 8 k +  4 + 2{j_{1,p}}\left(\frac{J_{2k}'(j_{1,p})}{J_{2k}(j_{1,p})} + \frac{J_{2k+2}'(j_{1,p})}{J_{2k+2}(j_{1,p})} \right)\,,\label{P}
\\
Q_{1,p}(k)&:=4k^2 +  2  j_{1,p} \frac{J_{2k}'(j_{1,p})}{J_{2k}(j_{1,p})}\,.\label{Q}
\end{align}

Before stating the result concerning the sign of $\lambda''$ at $\D$, two technical lemmas are in order.

\begin{lemma}\label{lem-resto}
Let $x>0$, $N\in \mathbb N$. Then 
$$
\frac{J_{N+2}'(x)}{J_{N+2}(y)} - \frac{J_{N}'(x)}{J_{N}(x)} =  \frac{2(N+1)}{x} \left[ \frac{J_{N+1}^2(x)}{ J_{N+2}(x) J_N(x)}  - 1\right]\,.  
$$
\end{lemma}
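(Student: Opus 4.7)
The plan is to derive the identity by expressing each of the logarithmic derivatives $J_{N+2}'/J_{N+2}$ and $J_N'/J_N$ in terms of ratios of two consecutive Bessel functions, subtract them, and then use the three-term recurrence \eqref{bess1} to factor out the desired quantity.

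Concretely, I would first apply identity \eqref{bess4} at index $N+2$, which yields
$$
\frac{J_{N+2}'(x)}{J_{N+2}(x)} = -\frac{N+2}{x} + \frac{J_{N+1}(x)}{J_{N+2}(x)}\,,
$$
and then identity \eqref{bess3} at index $N$, which yields
$$
\frac{J_{N}'(x)}{J_{N}(x)} = \frac{N}{x} - \frac{J_{N+1}(x)}{J_{N}(x)}\,.
$$
Both of these identities introduce only $J_{N+1}$, which is exactly the Bessel function appearing in the right-hand side of the claim, so this is the natural pairing to choose.

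Subtracting the second expression from the first and grouping terms gives
$$
\frac{J_{N+2}'(x)}{J_{N+2}(x)} - \frac{J_{N}'(x)}{J_{N}(x)} = -\frac{2(N+1)}{x} + J_{N+1}(x) \cdot \frac{J_N(x) + J_{N+2}(x)}{J_N(x)J_{N+2}(x)}\,.
$$
At this point I would invoke the recurrence \eqref{bess1} at index $N+1$, namely $J_N(x) + J_{N+2}(x) = \frac{2(N+1)}{x}J_{N+1}(x)$, and substitute into the numerator. Factoring $2(N+1)/x$ yields exactly the stated formula.

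There is no real obstacle: the lemma is a direct manipulation of the standard Bessel identities that are already recorded as \eqref{bess1}--\eqref{bess4}. The only small care required is to pick the two ``correct'' versions of the recurrence for $J_N'/J_N$ (one using $J_{N+1}$ from above, one from below) so that after subtraction the single function $J_{N+1}$ appears in both fractions and can be combined over a common denominator. Once that choice is made, the computation is completely mechanical.
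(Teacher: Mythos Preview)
Your proposal is correct and follows essentially the same approach as the paper: both use \eqref{bess4} at index $N+2$ and \eqref{bess3} at index $N$ to rewrite the logarithmic derivatives in terms of $J_{N+1}$, then apply the three-term recurrence \eqref{bess1} at index $N+1$ to simplify. The intermediate expressions and the final factorization are identical.
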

\begin{proof}
The statement readily follows from \eqref{bess1}, \eqref{bess3}, and \eqref{bess4}, indeed we have
\begin{align*}
 \frac{J_{N+2}'(x)}{J_{N+2}(y)} - \frac{J_{N}'(x)}{J_{N}(x)} & =  
 -  \frac{2(N+1)}{x} + J_{N+1}(x)\left[ \frac{1}{ J_{N+2}(x)}  + \frac{1}{J_N(x)}\right]
\\
&=  -  \frac{2(N+1)}{x} + \frac{J_{N+1}(x)}{J_{N+2}(x) J_N(x)}\left[ J_{N+2}(x)  + J_N(x)\right]
\\
& =  -  \frac{2(N+1)}{x} + \frac{2 (N+1)J_{N+1}^2(x)}{x J_{N+2}(x) J_N(x)}\,.
\end{align*}
\end{proof}

\begin{lemma}\label{lem-pq}
Let $P_{1,p}(k)$ and $Q_{1,p}(k)$ be defined in \eqref{P} and \eqref{Q}, respectively, with $p\geq 1$ and $k\geq 0$ integers. Then the following facts hold:
\begin{itemize}
\item[i)] for $p=1$ we have $P_ {1,1}(0)=Q_{1,1}(0)=Q_{1,1}(1)=0$ and $P_{1,1}(k)$, $Q_{1,1}(k+1)>0$ for every $k\geq 1$;
\medskip
\item[ii)] for every $p\geq 1$ and $k\geq 0$ we have $P_{1,p}(k)=Q_{1,p}(k) + Q_{1,p}(k+1)$;
\medskip
\item[iii)] for $p\geq 2$ we have $P_{1,p}(1)<0$.
\end{itemize}
\end{lemma}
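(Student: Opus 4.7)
I would handle the three items in the order (ii), (i), (iii), since (ii) is a purely algebraic identity and it simplifies the analysis of the sign in (i).

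\textbf{Part (ii).} Substituting the definition \eqref{Q} of $Q_{1,p}$, the polynomial part of $Q_{1,p}(k) + Q_{1,p}(k+1)$ is $4k^2 + 4(k+1)^2 = 8k^2 + 8k + 4$, and the Bessel-ratio parts are exactly $J'_{2k}/J_{2k} + J'_{2k+2}/J_{2k+2}$ evaluated at $j_{1,p}$. These match the corresponding terms in \eqref{P}, so no identity beyond the definitions is needed.

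\textbf{Part (i).} The starting point is $J_1(j_{1,1}) = 0$. Inserting this into \eqref{bess4} with $N = 0$ gives $j_{1,1}J'_0(j_{1,1}) = -j_{1,1}J_1(j_{1,1}) = 0$, so $Q_{1,1}(0) = 0$; the case $N = 2$ gives $j_{1,1}J'_2(j_{1,1})/J_2(j_{1,1}) = -2$, so $Q_{1,1}(1) = 4 - 4 = 0$. Part (ii) then yields $P_{1,1}(0) = 0$. For the positivity of $Q_{1,1}(k+1)$ when $k \geq 1$, I would apply the lower bound \eqref{stimaN} at $N = 2(k+1)$ and $x = j_{1,1}$; its hypothesis $x \leq N + 1/2$ is satisfied because $j_{1,1} < 4.5$. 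This yields $Q_{1,1}(k+1) \geq 4(k+1)(k+2) - 4j_{1,1}^2/(4k+5)$, which is positive as soon as $(k+1)(k+2)(4k+5) > j_{1,1}^2 \approx 14.68$; the left-hand side equals $54$ at $k = 1$ and increases with $k$. The positivity of $P_{1,1}(k)$ for $k \geq 1$ then follows from (ii) together with $Q_{1,1}(1) = 0$ and $Q_{1,1}(k) > 0$ for $k \geq 2$.

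\textbf{Part (iii).} For $p \geq 2$ the task is to locate the sign of $P_{1,p}(1) = 16 + 2j_{1,p}\bigl(J'_2/J_2 + J'_4/J_4\bigr)$ at $j_{1,p}$. The first ratio is already known from (i): $j_{1,p}J'_2/J_2 = -2$. For the second, I apply the three-term recurrence \eqref{bess1} twice, starting from $J_1(j_{1,p}) = 0$, to express $J_3(j_{1,p}) = (4/j_{1,p})J_2(j_{1,p})$ and $J_4(j_{1,p}) = (24/j_{1,p}^2 - 1)J_2(j_{1,p})$; substituting into \eqref{bess4} with $N = 4$ then gives $j_{1,p}J'_4/J_4 = -4 + 4j_{1,p}^2/(24 - j_{1,p}^2)$. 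Summing yields the closed form
\[
P_{1,p}(1) \;=\; 16 - 4 - 8 + \frac{8\, j_{1,p}^2}{24 - j_{1,p}^2} \;=\; \frac{4(24 + j_{1,p}^2)}{24 - j_{1,p}^2}\,,
\]
and since $p \geq 2$ forces $j_{1,p}^2 \geq j_{1,2}^2 > 24$, the numerator is positive while the denominator is negative, so $P_{1,p}(1) < 0$.

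The only non-routine step is the positivity in (i): one has to verify that the range condition $x \leq N + 1/2$ in \eqref{stimaN} holds where it is invoked, and handle $k = 1$ separately, since there $Q_{1,1}(1)$ is zero rather than positive and the needed strict positivity of $P_{1,1}(1)$ has to come entirely from $Q_{1,1}(2)$. Everything else reduces to the Bessel recurrences \eqref{bess1}--\eqref{bess4} and elementary arithmetic.
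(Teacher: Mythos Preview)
Your proof is correct in outline and conclusion. Parts (ii) and (iii) follow the same route as the paper. In part (i), however, you take a genuinely different path: rather than invoking Lemma~\ref{lem-resto} to show that $k\mapsto Q_{1,1}(k)$ is increasing (via the telescoping identity $Q_{1,1}(k+1)-Q_{1,1}(k)=4(2k+1)J_{2k+1}^2/(J_{2k+2}J_{2k})$ at $j_{1,1}$), you appeal directly to the Krasikov-type bound \eqref{stimaN}. Both arguments work; the paper's monotonicity argument is perhaps cleaner because it avoids any numerical check and does not depend on the external estimate \eqref{stimaN}, while yours is shorter once that estimate is available.

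One arithmetic slip in part (iii): the polynomial part $8k^2+8k+4$ at $k=1$ equals $20$, not $16$. With the correct constant your computation becomes
\[
P_{1,p}(1)=20-4-8+\frac{8j_{1,p}^2}{24-j_{1,p}^2}=\frac{192}{24-j_{1,p}^2},
\]
matching the paper's formula \eqref{pp1}. Your closed form $4(24+j_{1,p}^2)/(24-j_{1,p}^2)$ is therefore off, but the sign conclusion for $p\geq 2$ is unaffected since the numerator stays positive and the denominator stays negative either way.
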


\begin{proof}
Item (ii) follows by direct computation.

Exploiting the properties \eqref{bess3} and \eqref{bess4} of the Bessel functions, we get 
\begin{equation}\label{item-ii}
J_0'(j_{1,p})=0\,,\quad  J_2'(j_{1,p}) = - \frac{2}{j_{1,p}}  J_2(j_{1,p})\,,
\end{equation}
which imply that $P_{1,1}(0)=Q_{1,1}(0)=Q_{1,1}(1)=0$. To conclude the proof of item (i), thanks to (ii), we show that $Q_{1,1}(k)$ is a non decreasing function in $k$: for every $k\geq 1$, we have
\begin{align*}
Q_{1,1}(k+1) - Q_{1,1}(k)  & =  4 + 8 k +  2  j_{1,1}  \left[ \frac{J_{2k+2}'(j_{1,1})}{J_{2k+2}(j_{1,1})} -\frac{J_{2k}'(j_{1,1})}{J_{2k}(j_{1,1})}\right] 
\\ & = 4(2k+1)\frac{J_{2k+1}^2(j_{1,1})}{J_{2k+2}(j_{1,1}) J_{2k}(j_{1,1})}>0\,,
\end{align*}
where in the last equality we have used Lemma \ref{lem-resto} with $x=j_{1,1}$ and $N=2k$; while in the last inequality we have used the fact that, for $N\geq 2$, all the Bessel functions $J_N$ are positive till their first zero $j_{N,1}$, which is greater than $j_{1,1}$.

Let now $p\geq 2$. Again in view of the properties of the Bessel functions recalled in \eqref{bess1}-\eqref{bess4}, we get
$$
  J_3(j_{1,p}) =  \frac{4}{j_{1,p}} J_2(j_{1,p})\,,\quad  J_4(j_{1,p}) =  \frac{24-j_{1,p}^2}{j_{1,p}^2} J_2(j_{1,p})\,, \quad  J_{4}'(j_{1,p}) = \frac{8( j_{1,p}^2- 12)}{j_{1,p}^3}  J_2(j_{1,p})\,.
$$
These equalities, combined with \eqref{item-ii}, give
\begin{equation}\label{pp1}
P_{1,p}(1) =20 + 2j_{1,p} \left(\frac{J_{2}'(j_{1,p})}{J_{2}(j_{1,p})} + \frac{J_{4}'(j_{1,p})}{J_{4}(j_{1,p})} \right) 
 = 16 \left[ 1 + \frac{ j_{1,p}^2- 12}{24-j_{1,p}^2} \right] =   \frac{192}{24-j_{1,p}^2}\,.
\end{equation}
Since $j^2_{1,p}>24$ for every $p\geq 2$, the proof of (iii) is achieved.
\end{proof}

Exploiting these properties on the coefficients $P_{1,p}$ and $Q_{1,p}$, we conclude the following
\begin{theorem}\label{thm-m1}
Let $\lambda$ be a double eigenvalue of the disk of the form $\lambda=j_{1,p}^2$ for some $p\geq 1$. If $\lambda= \lambda_2=\lambda_3$, then the $\D$ is a weak local minimizer in the class of constant width sets. In all the other cases, the disk is not a weak local minimizer.
\end{theorem}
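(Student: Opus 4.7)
The plan is to split the argument along the dichotomy already prepared by Lemma \ref{lem-pq}: the favorable case $p=1$, where the positivity of the second shape derivative must be established for \emph{every} admissible deformation, and the unfavorable case $p\geq 2$, where it suffices to exhibit one deformation making $\lambda''(\D;V)$ strictly negative. In both branches I shall work directly from formula \eqref{lambdasec} and from Remark \ref{rem-qm}, recalling that, since the derivative at a double eigenvalue is of the form $L_1\pm|L_2|$, weak local minimality of $\D$ (for both $\lambda_h$ and $\lambda_{h+1}$) amounts to $L_1-|L_2|\geq 0$, while non–minimality only requires one admissible $V$ with $L_1-|L_2|<0$.

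For the case $p=1$, the strategy is to use the telescoping identity (ii) of Lemma \ref{lem-pq}, $P_{1,1}(k)=Q_{1,1}(k)+Q_{1,1}(k+1)$, to rewrite the ``diagonal'' $P$-sum as two reindexed $Q$-sums. After shifting the second index by one and exploiting the vanishings $Q_{1,1}(0)=Q_{1,1}(1)=0$ coming from (i), I expect to obtain
\[
L_1=\sum_{k\geq 2} Q_{1,1}(k)\bigl[|c_{2k-1}|^{2}+|c_{2k+1}|^{2}\bigr],
\qquad
L_2=2\sum_{k\geq 2} Q_{1,1}(k)\, c_{2k+1}\,\overline{c_{2k-1}},
\]
where I have used $c_{1-2k}=\overline{c_{2k-1}}$. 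The crux is then a one-line AM--GM bound: since $Q_{1,1}(k)\geq 0$ for $k\geq 2$ by (i),
\[
|L_2|\leq 2\sum_{k\geq 2} Q_{1,1}(k)\,|c_{2k+1}|\,|c_{2k-1}|\leq \sum_{k\geq 2} Q_{1,1}(k)\bigl[|c_{2k+1}|^{2}+|c_{2k-1}|^{2}\bigr]=L_1,
\]
giving $L_1-|L_2|\geq 0$, i.e.\ a weak local minimum for both $\lambda_2$ and $\lambda_3$.

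For the case $p\geq 2$, I shall simply test against the single-mode admissible deformation $\phi(\theta)=a_3\cos 3\theta+b_3\sin 3\theta$ (that is, $c_3\neq 0$ and all other $c_{2k+1}=0$, which is admissible because odd harmonics automatically satisfy \eqref{supp3} and the convexity condition \eqref{supp4} holds for $\e$ small, $\D$ being strictly convex). For this $V$ every cross-product $c_{1+2k}c_{1-2k}$ vanishes, so $L_2=0$, and \eqref{lambdasec} collapses to
\[
\frac{\lambda''(\D;V)}{2 j_{1,p}^{2}} = P_{1,p}(1)\,|c_3|^{2}<0
\]
by item (iii) of Lemma \ref{lem-pq}. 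Since $L_2=0$, both branches $L_1\pm|L_2|$ are strictly negative, so neither $\lambda_h$ nor $\lambda_{h+1}$ can be weakly minimized by $\D$.

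The only non-routine step is the reindexing/regrouping in the $p=1$ case that exposes a sum-of-squares structure; the rest is bookkeeping and a direct application of Lemma \ref{lem-pq}. The elegance of the argument relies essentially on the simultaneous vanishing of $P_{1,1}(0)$, $Q_{1,1}(0)$, and $Q_{1,1}(1)$, without which the telescoping would leave uncontrolled boundary terms that could, a priori, destroy non-negativity.
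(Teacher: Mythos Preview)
Your proof is correct and follows essentially the same approach as the paper's: for $p=1$ both arguments use the identity $P_{1,1}(k)=Q_{1,1}(k)+Q_{1,1}(k+1)$ together with the non-negativity of the $Q_{1,1}(k)$ and an AM--GM/Young bound to show $L_1-|L_2|\geq 0$, while for $p\geq 2$ both test against the single mode $c_3\neq 0$ and invoke $P_{1,p}(1)<0$. The only difference is cosmetic: you decompose $L_1$ into $Q$-sums first and then bound $|L_2|$, whereas the paper bounds the $Q$-sum first and recombines it into the $P$-sum.
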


\begin{proof}
Let $\lambda=\lambda_2=\lambda_3=j_{1,1}^2$. In view of Lemma \ref{lem-pq}-(ii), the coefficients $Q_{1,1}(k)$ are non negative, thus, by the Young inequality, we have
\begin{align*}
\left| 2 q_1 \sum_{k\geq 1}Q_{1,1}(k) c_{1+2k}c_{1-2k} \right|  & \leq  \sum_{k\geq 1} Q_{1,1}(k)|c_{1+2k}|^2 + \sum_{k\geq 1} Q_{1,1}(k) |c_{1-2k}|^2 
\\ &= \sum_{k\geq 1} Q_{1,1}(k)|c_{1+2k}|^2 +  \sum_{k\geq 0} Q_{1,1}(k+1) |c_{2k+1}|^2
\\ & =  \sum_{k\geq 0}\Big(  Q_{1,1}(k)+ Q_{1,1}(k+1) \Big)  |c_{2k+1}|^2\,.
\end{align*}
Therefore
$$
\frac{\lambda''(\D;V)}{2 j_{1,p}^2}\geq \sum_{k\geq 0}\Big(  P_{1,1}(k) - Q_{1,1}(k) - Q_{1,1}(k+1)  \Big)   |c_{2k+1}|^2  = 0\,,
$$
indeed $Q_{1,1}(k)+ Q_{1,1}(k+1)=P_{1,1}(k)$ by Lemma \ref{lem-pq}-(i).

In all the other cases, namely when $\lambda=j_{1,p}^2$ for some $p\geq 2$, the deformation $V$ associated to $c_3=1$ and $c_i=0$ for every $i\neq 3$, gives a negative second order shape derivative at $\D$: indeed, in view of \eqref{lambdasec}, we get
$$
\frac{\lambda''(\D;V)}{2j^2_{1,p}} =P_{1,p}(1)\,,
$$
which, by Lemma \ref{lem-pq}-(iii)  is negative for every $p\geq 2$.
\end{proof}

\subsubsection{Sign of $\lambda''$: the case $m\geq 2$}
In this subsection, $m$ will always be a natural number greater than or equal to 2.

Similarly as above, we may write
\begin{equation}\label{split-pr}
\frac{\lambda''(\D;V)}{2j_{m,p}^2} = \sum_{k \geq 0} P_{m,p}(k) |c_{2k+1}|^2+ q \sum_{\ell =-\infty}^{+\infty} R_{m,p}(\ell) c_{m-\ell} c_{m+\ell}\,,
\end{equation}
where
\begin{align}
P_{m,p}(k) & :=
8 k^2 + 8 k + 4 + 2j_{m,p}\left(   \frac{J_{2k+1+m}'(j_{m,p})}{J_{2k+1+m}(j_{m,p})} + \frac{J_{2k+1-m}'(j_{m,p})}{J_{2k+1-m}(j_{m,p})}  \right)\,,\label{def-Pmp}
\\
R_{m,p}(\ell) &:=\ell^2 - m^2 + 1 + 2 j_{m,p} \frac{J_\ell'(j_{m,p})}{J_\ell(j_{m,p})} \quad \hbox{for }\ell \neq \pm m\,.\label{def-Rmp}
\end{align}

By the Young inequality, the second term in the right-hand side of \eqref{split-pr} can be bounded as follows:
\begin{align}
&  q \sum_{\ell =-\infty}^{+\infty} R_{m,p}(\ell) c_{m-\ell} c_{m+\ell}  \geq - \frac12 \sum_{\ell =-\infty}^{+\infty} |R_{m,p}(\ell)| |c_{m-\ell}|^2- \frac12\sum_{\ell =-\infty}^{+\infty} |R_{m,p}(\ell)| |c_{m+\ell}|^2\notag
\\
& = -\sum_{k \geq 0}\Big( |R_{m,p}(2k+1 +m)| + |R_{m,p}(2k+1 -m)|\Big)  |c_{2k+1}|^2\,.\label{Young-R}
\end{align}

\begin{remark}\label{rem-0}
We point out that in the second term of the right-hand side of \eqref{split-pr} the sole non mixed term is $c_m c_m$, which corresponds to $\ell =0$. Therefore, if there was $\overline{k}\neq (m-1)/2$ such that $P_{m,p}(\overline{k})<0$, then we would find a width preserving deformation $V$ for which $\lambda''(\D;V)<0$.
On the other hand, if $P_{m,p}$ and $R_{m,p}$ were always non negative, then the second order shape derivative along any width preserving direction, computed at the disk, would be non negative, since 
\begin{equation}\label{relPR}
R_{m,p}(2k+1 +m) +  R_{m,p}(2k+1 -m)  = P_{m,p}(k)\,.
\end{equation}
\end{remark}

Unfortunately, in general none of these two conditions is satisfied, and the study of the sign of $\lambda''$ deserves a more precise investigation, which is object of Theorem \ref{wlm2}. Before stating the result, we give two technical lemmas.

\begin{lemma}\label{lem-pqm2} For every integer $m\geq 2$ set
\begin{equation}\label{abg}
\alpha_m :=2 \frac{(m^2-4)(m^2-1)}{2m^2 +1}\,,\quad \beta_m := 4 (m-2)(m-1)\,,\quad \gamma_m:=4(m+2)(m+1)\,.
\end{equation}
The coefficients $P_{m,p}$ and $R_{m,p}$ defined in \eqref{def-Pmp} and \eqref{def-Rmp}, respectively, satisfy
\begin{itemize}
\item[i)] for every $m\geq 2$, $P_{m,p}(1)< 0$ if and only if 
$ j_{m,p}^2 < \beta_m $ or $ j_{m,p}^2 >\gamma_m$;
\medskip
\item[ii)] for every $m\geq 9$, $P_{m,p}(2)<0$ when $\beta_m < j_{m,p}^2 < \gamma_m$;
\medskip
\item[iii)] for $m=7$ and $p=1$, $P_{7,1}(k) \geq 0$ for every $k\geq 0$;
\medskip
\item[iv)] for $m=7$ and $p=2$, $R_{7,2}(0)<0$ and $R_{7,2}(2k)\geq 0$ for every $k\geq 1$.
\end{itemize}
\end{lemma}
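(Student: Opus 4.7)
All four statements rest on the same computational device: at $x=j_{m,p}$ the identity $J_m(x)=0$ together with the three-term recurrence \eqref{bess1} expresses $J_{m\pm k}(x)$ as an explicit rational multiple of $J_{m+1}(x)$ for every $k\geq 1$, and \eqref{bess3}--\eqref{bess4} then convert each quantity $x\,J'_N(x)/J_N(x)$ appearing in \eqref{def-Pmp}--\eqref{def-Rmp} into a rational function of $x^2$. In particular, a direct use of \eqref{bess3}--\eqref{bess4} yields
$$
x\,\frac{J'_{m+1}(x)}{J_{m+1}(x)} = -(m+1)\,,\qquad x\,\frac{J'_{m-1}(x)}{J_{m-1}(x)} = m-1\,.
$$

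\emph{Part (i).} Apply Lemma \ref{lem-resto} with $N=m+1$ and with $N=m-3$ and substitute the rational formulas for $J_{m\pm 2}/J_{m\pm 1}$ and $J_{m\pm 3}/J_{m\pm 1}$ obtained from \eqref{bess1}. This produces $P_{m,p}(1)$ as an explicit rational function of $x^2$; after clearing denominators, the numerator factors as a positive multiple of $(x^2-\beta_m)(x^2-\gamma_m)$, while the denominator keeps a fixed sign on the relevant range. The sign statement in (i) then follows by inspection.

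\emph{Part (ii).} Two further applications of Lemma \ref{lem-resto} (with $N=m+3$ and $N=m-5$) give $P_{m,p}(2)$ as a rational function of $x^2$; its numerator is a polynomial in $x^2$ of degree four. One checks that, on the interval $\beta_m<x^2<\gamma_m$, this polynomial is strictly negative whenever $m\geq 9$, by evaluating it at the endpoints and examining the sign of its leading coefficient. The constant $\alpha_m$ from \eqref{abg} arises naturally during this simplification.

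\emph{Part (iii).} Here $m=7$ and $x=j_{7,1}$. For the small indices $k=0,1,\dots,K_0$ the closed-form expressions derived above evaluate to concrete non negative numbers (a routine numerical check, relying on the known values of $J_N(j_{7,1})$ for small $N$). For $k>K_0$, both Bessel indices $2k+1\pm 7$ exceed $j_{7,1}+1/2$, so the estimate \eqref{stimaN} applies simultaneously to both logarithmic derivatives in \eqref{def-Pmp} and yields a bound of the form $P_{7,1}(k)\geq 8k^2+O(k)$, which is manifestly positive for $k$ large.

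\emph{Part (iv).} With $m=7$ and $x=j_{7,2}$, the identity $J'_0=-J_1$ gives $R_{7,2}(0) = -48 - 2x\,J_1(x)/J_0(x)$. Since $j_{7,2}$ lies in $(j_{0,4},j_{0,5})\cap(j_{1,4},j_{1,5})$, both $J_0(j_{7,2})$ and $J_1(j_{7,2})$ are positive, and therefore $R_{7,2}(0)<0$. For $k\geq 1$, the finitely many small values are handled via the same closed-form rational evaluation as above, while for $k$ large enough that \eqref{stimaN} applies with $N=2k$, the dominant term $4k^2$ immediately ensures $R_{7,2}(2k)\geq 0$.

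\emph{Main obstacle.} Conceptually the scheme is straightforward, but the algebra in parts (i) and especially (ii) is substantial: the rational expressions grow rapidly with $k$, and the factorization in (i) and the sign analysis of the quartic polynomial in (ii) are where errors are easiest to make; this is also precisely what forces the restriction $m\geq 9$ in (ii). For parts (iii)--(iv), the delicate point is to pinpoint the thresholds $K_0$ and $K_1$ above which \eqref{stimaN} becomes effective, so that only a finite list of residual cases needs direct verification.
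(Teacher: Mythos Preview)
Your overall scheme---reduce everything to rational functions of $y^2=j_{m,p}^2$ via the Bessel recursions, then analyse signs---is exactly the paper's route, and parts (iii)--(iv) are fine (your use of \eqref{stimaN} for large $k$ is a legitimate alternative to the paper's monotonicity argument via Lemma~\ref{lem-resto}, and your explicit treatment of $R_{7,2}(0)$ is clean). However, the descriptions you give of what the algebra actually produces in (i) and (ii) are wrong in ways that matter.

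\textbf{Part (i).} When you carry out the reduction you will find
\[
P_{m,p}(1)=64\,\frac{2(m^2-4)(m^2-1)-(2m^2+1)\,y^2}{(\gamma_m-y^2)(\beta_m-y^2)}\,,
\]
so the numerator is \emph{linear} in $y^2$ (vanishing at $\alpha_m$), while it is the \emph{denominator} that carries the factor $(y^2-\beta_m)(y^2-\gamma_m)$ and hence changes sign at $\beta_m$ and $\gamma_m$. Your plan has these roles reversed and asserts the denominator keeps a fixed sign; with that picture the sign analysis cannot come out right. The constant $\alpha_m$ in \eqref{abg} is precisely the root of the numerator, not something that ``arises naturally'' only in part (ii).

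\textbf{Part (ii).} After two more recursion steps you get $P_{m,p}(2)=8F(y^2)\big/\big(D_1(y^2)D_2(y^2)\big)$ with $D_1,D_2$ quadratic in $y^2$. The leading $t^4$ terms (writing $t=y^2$) in $4D_1D_2$ and in $t(N_1D_2-N_2D_1)$ cancel, so $F$ is a \emph{cubic} in $y^2$, not a quartic. More importantly, ``evaluating at the endpoints and examining the sign of the leading coefficient'' is not enough: a cubic (or quartic) can be negative at $\beta_m$ and $\gamma_m$ yet positive somewhere in between. The paper closes this gap by checking that $D_1,D_2<0$ on $(\beta_m,\gamma_m)$ for $m\ge 9$, and that $F$ and $F'$ are negative at both endpoints while $F'$ has no critical point in the interval; you need an argument of this type, not just endpoint values and a leading sign.
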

\begin{proof}
Throughout the proof, for brevity we will adopt the notation $y:=j_{m,p}$.

\medskip

Taking $k=1$ in \eqref{def-Pmp} we obtain
$$
P_{m,p}(1)= 2 \left[10 + y\left(   \frac{J_{3+m}'(y)}{J_{3+m}(y)} + \frac{J_{m-3}'(y)}{J_{m-3}(y)}  \right)\right]\,.
$$

In view of the properties of the Bessel functions \eqref{bess1}-\eqref{bess4}, we get
\begin{align*}
& J_{m+2}(y)= \frac{2(m+1)}{y}J_{m+1}\,,\quad J_{m-2}(y)= \frac{2(m - 1)}{y}J_{m-1}(y)\,,
\\
&
J_{m+3}(y)=\left(\frac{4(m+2)(m+1)}{y^2} - 1 \right)  J_{m+1}(y)\,,\quad J_{m-3}(y)= \left(\frac{4(m-2)(m-1)}{y^2}- 1\right)J_{m-1}(y) \,,
\\
& y J_{m+3}'(y) = -(m+3) J_{m+3}(y) + y J_{m+2}(y)\,,\quad  y J_{m-3}'(y)= y (m-3) J_{m-3}(y) - y J_{m-2}(y)\,,
\end{align*}
so that
\begin{align*}
& y  \frac{J_{m+3}'}{J_{m+3}}(y) = -(m+3)+ 2(m+1) \left(\frac{4(m+2)(m+1)}{y^2}- 1 \right)^{-1},
\\
& y \frac{J_{m-3}'}{J_{m-3}}(y) =  (m-3) -  2(m - 1) \left( \frac{4(m-2)(m-1)}{y^2} - 1\right)^{-1}.
\end{align*}
Thus
$$
P_{m,p}(1)=64 \frac{2 (m^2-4)(m^2-1) - y^2  ( 2m^2  + 1 ) }{[4(m+2)(m+1)-y^2]\,[4(m-2)(m - 1)
-y^2]}\,.
$$
This expression allows us to easily obtain the characterization (i): $P_{m,p}(1)\geq 0$ if and only if
$$
0 < j_{m,p}^2 \leq 2 \frac{(m^2-4)(m^2-1)}{2m^2 +1}    \quad \hbox{or}\quad  4 (m-2)(m-1) < j_{m,p}^2 < 4(m+2)(m+1)\,.
$$

\medskip  As already done at the beginning of the proof, iterating the procedure twice more, we may express $J_{m+5}(y)$ and $J_{m+5}'(y)$ in terms of $J_{m+1}(y)$, and $J_{m-5}(y)$ and $J_{m-5}'(y)$ in terms of $J_{m-1}(y)$, to get
\begin{equation}\label{Jpm5}
y\left(   \frac{J_{5+m}'(y)}{J_{5+m}(y)} + \frac{J_{m-5}'(y)}{J_{m-5}(y)}  \right) = -20 + 8 y^2 \left[ \frac{N_1(y^2)}{D_1(y^2)} - \frac{N_2(y^2)}{D_2(y^2)}\right]\,,
\end{equation}
where $N_i$ and $D_i$ are the following polynomials:
\begin{align*}
& N_1(y^2)=-y^2 (m+2) + 2 (m+3)(m+2)(m+1)\,,
\\
& N_2(y^2)=-y^2 (m-2) + 2 (m-3)(m-2)(m-1)\,,
\\
& D_1(y^2)=y^4 - 12 (m+3)(m+2) y^2 + 16 (m+4) (m+3)(m+2)(m+1)\,,
\\
& D_2(y^2)=y^4 - 12 (m-3)(m-2) y^2 + 16 (m-4) (m-3)(m-2)(m-1)\,.
\end{align*}
Inserting \eqref{Jpm5} in \eqref{def-Pmp} for $k=2$, we get
\begin{equation}\label{Pmp2}
P_{m,p}(2)= 8 \left[4 + y^2 \left(\frac{N_1(y^2)}{D_1(y^2)} - \frac{N_2(y^2)}{D_2(y^2)}\right)\right] = 8 \frac{F(y^2)}{D_1(y^2) D_2(y^2)}\,,
\end{equation}
with
\begin{align*}
F(y^2):= & - (144 m^2 + 264) y^6 + (912 m^4 - 3024 m^2 + 20544) y^4 +
\\ & - (1792 m^6 - 17408 m^4 - 12032 m^2 + 211968) y^2 + 
\\ & + 1024 m^8 - 30720 m^6 + 279552 m^4 - 839680 m^2 + 589824\,. 
\end{align*}
Our goal is to give a sufficient condition on $m$ for the negativity of $P_{m,p}(2)$. First, we notice that both $D_1$ and $D_2$ define parabolas with vertical axis, oriented upward. Therefore, $D_i$ are negative in $(\beta_m,\gamma_m)$ if and only if  $D_i(\beta_m)$ and $D_i(\gamma_m)$ are negative. A direct computation allows to conclude that this is true for $m$ greater than or equal to 9. Finally, it is easy to show that, for $m\geq 9$, $F$ and $F'$ are negative at $\beta_m$ and $\gamma_m$, moreover the derivative $F'$ has no critical point inside $(\beta_m,\gamma_m)$. Therefore $F$ is negative in the whole interval $(\beta_m,\gamma_m)$. This concludes the proof of (ii). 

\medskip Let now $m=7$ and $p=1$. It is easy to prove (e.g., by hand or numerically) that $P_{7,1}(k)$ is non negative for small values of $k$, say for $k$ between 0 and 10. For larger values of $k$, we show that $k\mapsto P_{7,1}(k)$ is increasing: indeed, by combining the definition \eqref{def-Rmp} and Lemma \ref{lem-resto}, we infer that the difference between two subsequent terms reads
\begin{align}
P_{7,1}(k+1)-P_{7,1}(k) & = 16k + 16 + 2 y\left[ \frac{J_{2k+10}'(y)}{J_{2k+10}(y)} - \frac{J_{2k-6}'(y)}{J_{2k+6}(y)}\right]
\notag \\
 &= 4\left[   \frac{(2k+9)J_{2k+9}^2(y)}{J_{2k+10}(y) J_{2k+8}(y)} + \frac{(2k-5)J_{2k-5}^2(y)}{J_{2k-4}(y) J_{2k-6}(y)}\right]\,.\label{proof3}
\end{align}
We recall that every Bessel function $J_h$ is positive on $(0,j_{h,1})$ and the sequence of first zeros $\{j_{h,1}\}_{h\in \mathbb N}$ is increasing. In particular, $J_h(j_{7,1})>0$ whenever $h>7$, so that \eqref{proof3} is positive. This concludes the proof of (iii).

\medskip
Finally, let $m=7$ and $p=2$. As for (iii), by direct computation, it is easy to show that $R_{7,2}(0)<0$ and $R_{7,2}(2k)\geq 0$ for $k$ between 1 and 6. For the subsequent terms, we show that $k\mapsto R_{7,2}(2k)$ is increasing: by applying Lemma \ref{lem-resto} with $N=2k$, we get
\begin{align*}
R_{7,2}(2(k+1))- R_{7,2}(2k) & = 8k+4 + 2y \left[ \frac{J_{2k+2}'(y)}{J_{2k+2}(y)} - \frac{J_{2k}'(y)}{J_{2k}(y)}\right]
= \frac{J_{2k+1}^2(y)}{J_{2k+2}(y)J_{2k(y)}}>0\,,
\end{align*}
for every $k\geq 6$. As for \eqref{proof3}, the last inequality follows by the fact that $J_h>0$ in $(0,j_{h,1})$ for every $h\in \mathbb N$, and $j_{h,1}>j_{7,2}$ for every $h\geq 11$. This concludes the proof of (iv).
\end{proof}

\begin{lemma}\label{m7p12}
Let $m=7$ and $p=1$ or $2$. Then 
\begin{itemize}
\item[i)] there exists a deformation $V$ such that the right-hand side of \eqref{split-pr} is negative for suitable choice of $q$;
\item[ii)]  for every $V$ there exists a choice of $q$ that makes the right-hand side of  \eqref{split-pr} non negative.
\end{itemize}
\end{lemma}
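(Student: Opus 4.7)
After optimizing the unimodular $q$ so that $qL_2$ is real (Remark \ref{rem-qm}), the right-hand side of \eqref{split-pr} takes one of the two values $L_1\pm|L_2|$, where
$L_1:=\sum_{k\geq 0}P_{7,p}(k)|c_{2k+1}|^2$ and $L_2:=\sum_{\ell\in\mathbb Z}R_{7,p}(\ell)c_{7-\ell}c_{7+\ell}$. Thus part (i) amounts to producing a width-preserving $V$ with $L_1<|L_2|$, while part (ii) asks that $L_1+|L_2|\geq 0$ for every such $V$.

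For part (i), the plan is to test explicit ansatz deformations. When $p=2$, take $c_7=c_{-7}=1$ and all other $c_\ell=0$: the first sum collapses to $P_{7,2}(3)$, and the second (via the sole surviving term at $\ell=0$) to $R_{7,2}(0)$, which is negative by Lemma \ref{lem-pqm2}(iv). With the appropriate sign of $q$, the right-hand side of \eqref{split-pr} becomes $P_{7,2}(3)-|R_{7,2}(0)|$, shown to be negative by a finite Bessel computation based on \eqref{bess1}--\eqref{bess4} at $j_{7,2}$. When $p=1$, Lemma \ref{lem-pqm2}(iii) already gives $L_1\geq 0$, so the negativity must come from off-diagonal coupling: take $V$ supported on two modes $c_1$ and $c_{13}$, so that the off-diagonal sum reduces to the pair $\ell=\pm 6$, producing $2R_{7,1}(6)c_1c_{13}$. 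After optimizing $q$, the resulting quadratic form in $(|c_1|,|c_{13}|)$ admits a negative value precisely when $R_{7,1}(6)^2>P_{7,1}(0)P_{7,1}(6)$, a single Bessel inequality to verify.

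For part (ii), the case $p=1$ is immediate: $L_1\geq 0$ by Lemma \ref{lem-pqm2}(iii) implies $L_1+|L_2|\geq 0$. The case $p=2$ is more subtle. The identity \eqref{relPR} specializes to $P_{7,p}(k)=R_{7,p}(2k+8)+R_{7,p}(2k-6)$, and Lemma \ref{lem-pqm2}(iv) shows that the only negative entry among the relevant $R_{7,2}(2j)$'s is $R_{7,2}(0)$, which appears only for the pairing index $k=3$. Consequently $P_{7,2}(k)\geq 0$ for every $k\neq 3$, and the key identity
$P_{7,2}(3)+|R_{7,2}(0)|=R_{7,2}(14)\geq 0$ follows from \eqref{relPR} with $k=3$. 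Combining these observations via a case analysis on the relative size of $|R_{7,2}(0)||c_7|^2$ versus $|T|:=\bigl|2\sum_{\ell>0\text{ even}}R_{7,2}(\ell)c_{7-\ell}c_{7+\ell}\bigr|$, and using the reverse triangle inequality $|L_2|\geq\bigl||R_{7,2}(0)||c_7|^2-|T|\bigr|$ followed by a Young-type bound of $|T|$ against the non-negative diagonal terms $\sum_{k\neq 3}P_{7,2}(k)|c_{2k+1}|^2$ through the pairing provided by \eqref{relPR}, one concludes $L_1+|L_2|\geq 0$ in all cases.

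The main obstacle is the delicate case analysis in part (ii) for $p=2$: one must simultaneously handle the negative contribution of $R_{7,2}(0)$ in $L_2$ (and potentially of $P_{7,2}(3)$ in $L_1$) against the positive entries, with the Young inequality calibrated so that each cross-term weight matches the corresponding diagonal weight supplied by \eqref{relPR}. Part (i) is comparatively straightforward, reducing to one or two explicit ansatz deformations and a handful of Bessel-function inequalities at $j_{7,1}$ and $j_{7,2}$.
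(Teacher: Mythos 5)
Your treatment of part (ii) and of part (i) for $p=2$ is essentially the paper's argument: for $p=2$ the paper also uses the single mode $c_7$ (so that the right-hand side of \eqref{split-pr} reduces to $P_{7,2}(3)+R_{7,2}(0)<0$), and for part (ii) it uses exactly the reverse triangle inequality to isolate $|R_{7,2}(0)||c_7|^2$, a Young bound of the off-diagonal terms against the diagonal, and the identity \eqref{relPR} (which gives $P_{7,2}(3)+|R_{7,2}(0)|=R_{7,2}(14)$), while for $p=1$ it observes, as you do, that non-negativity of all $P_{7,1}(k)$ forces one of the two admissible choices of $q$ to give a non-negative value. No case analysis is actually needed in (ii): the one-sided bound $|L_2|\geq |R_{7,2}(0)||c_7|^2-|T|$ together with the Young estimate already yields the conclusion.

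However, your construction for part (i) with $p=1$ fails. The pair $(c_1,c_{13})$ is coupled in \eqref{split-pr} through $R_{7,1}(\pm 6)$, and this coefficient vanishes identically: by \eqref{bess3}, $xJ_6'(x)=6J_6(x)-xJ_7(x)$, so at $x=j_{7,1}$ one gets $2j_{7,1}J_6'(j_{7,1})/J_6(j_{7,1})=12$ and hence $R_{7,1}(6)=6^2-7^2+1+12=0$. Moreover the same recurrences give $P_{m,p}(0)=0$ for every $m,p$ (this is just translation invariance, which is also why the paper imposes $c_1=c_{-1}=0$, so the mode $c_1$ is not even an admissible direction in the paper's setting). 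Thus your criterion $R_{7,1}(6)^2>P_{7,1}(0)P_{7,1}(6)$ reads $0>0$ and is false, and the quadratic form restricted to your two modes is $P_{7,1}(6)|c_{13}|^2\geq 0$ for either choice of $q$: no negative direction can be produced this way. The paper instead takes the modes $c_5$ and $c_9$, coupled through $R_{7,1}(2)$, and checks that the determinant $P_{7,1}(2)P_{7,1}(4)-R_{7,1}(2)^2$ is negative, so an eigenvector of the negative eigenvalue of the corresponding $2\times 2$ matrix does the job; your argument needs to be repaired along these lines (any admissible pair with a verified negative $2\times 2$ determinant would do, but the one you chose cannot work).
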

\begin{proof}
In the following, when no ambiguity may arise, we shall omit the subscript $m,p$.

Let us prove (i). For $p=1$, we take $q=1$, and $c_i\in \mathbb R$ for every $i$, $c_i=0 $ $\forall i\neq \pm 5, \pm 9$. Then the right-hand side of  \eqref{split-pr} reads $ P(2) |c_5|^2 + P(4) |c_9|^2 + 2  c_5 c_9$ or equivalently, using a matrix formulation, 
$$
\left(
\begin{array}{cc}
  P(2)  &  R(2)
 \\
   R(2)   & P(4)
\end{array}
\right)\left(
\begin{array}{cc}
c_5
 \\
c_9\end{array}
\right)\cdot (c_5\,,\ c_9)\,.
$$
Since the determinant of the above $2\times 2$ matrix is negative, it is enough to take as $(c_5,c_9)$ an eigenvector corresponding to the negative eigenvalue to conclude the proof.

For $p=2$ it is enough  to take $q=1$, $c_i=0$ for every $i\neq 3$, and $c_3=1$: in this case the right-hand side of  \eqref{split-pr} equals $P(3)+R(0)$ which is negative.

\medskip

Let us now prove (ii). First, we notice that for every deformation, there exist only two values of $q\in \mathbb C$, $|q|=1$ that ensure that the right-hand side of  \eqref{split-pr} is a real number, and they are one opposite to the other, say $q=\pm q^*$. Let $p=1$. Arguing by contradiction, it is easy to see that the expressions corresponding to $q^*$ and $-q^*$ cannot be negative simultaneously: indeed, by adding them we would get that $\sum_{k\geq 0} P(k) |c_{2k+1}|^2$ is negative too, which is absurd, since all the $P_{7,1}(k)$s are non negative (see Lemma \ref{lem-pqm2}-(iii)).

For $p=2$ we cannot use the same trick, since $P_{7,2}(3)<0$. Given a deformation, we consider the complex unit number $q$ such that the right-hand side of  \eqref{split-pr} reads
\begin{equation}\label{q+}
 \sum_{k \geq 0} P(k) |c_{2k+1}|^2+ \left| \sum_{\ell =-\infty}^{+\infty} R(\ell) c_{m-\ell} c_{m+\ell}\right|\,.
\end{equation}
Using the easy bound  $|x+y|\geq |x| - |y|$ and the Young inequality (cf. \eqref{Young-R}), we get
\begin{align*}
&  \left| \sum_{\ell =-\infty}^{+\infty} R(\ell) c_{m-\ell} c_{m+\ell}\right| \geq  |R(0)| |c_7|^2  -  \left| \sum_{\ell \neq 0} R(\ell) c_{m-\ell} c_{m+\ell}\right|
 \\
 & \geq 
( |R(0)| - |R(14)| )  |c_7|^2  -  \sum_{ k \geq 0\,,\ k\neq 3 }\Big( |R(2k+1 +m)| + |R(2k+1 -m)|\Big)  |c_{2k+1}|^2\,.
\end{align*}
Since all the $R(\ell)$ are non negative except from $R(0)$ (see Lemma \ref{lem-pqm2}-(iv)), we infer that expression \eqref{q+} can be bounded from below by
$$
 \sum_{k \geq 0}\Big(  P(k) -  R(2k+1 +m) - R(2k+1 -m)\Big) |c_{2k+1}|^2 =0\,,
$$
where the last equality follows from \eqref{relPR}. This concludes the proof.
\end{proof}

We are now in a position to state the following
\begin{theorem}\label{wlm2}
Let $\lambda$ be a double eigenvalue of the disk of the form $\lambda=j_{m,p}^2$ for some $m\geq 2$, $p\geq 1$. If
$$
\lambda = \lambda_4=\lambda_5\,,\, \lambda_7=\lambda_8\,,\, \lambda_{11}=\lambda_{12}\,,\, \lambda_{16}=\lambda_{17}\,,\, \lambda_{27}\,,\, \lambda_{33}=\lambda_{34}\,,\, \lambda_{41}=\lambda_{42}\,,\, \lambda_{50}\,,
$$
then $\D$ is a weak local minimizer in the class of constant width sets. In all the other cases, the disk in not a weak local minimizer. 
\end{theorem}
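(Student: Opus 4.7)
The plan is to exploit the decomposition \eqref{split-pr} to extract two clean dichotomy criteria, and then to verify them case by case over the admissible pairs $(m,p)$ with $m\geq 2$. Following Remark \ref{rem-0}, if $P_{m,p}(\bar k)<0$ for some integer $\bar k\ne (m-1)/2$, then taking $c_{2\bar k+1}=1$ and every other Fourier coefficient equal to zero produces an admissible width-preserving deformation along which \eqref{split-pr} equals $P_{m,p}(\bar k)<0$, so $\D$ is not a weak local minimizer. Conversely, if $P_{m,p}(k)\geq 0$ for every $k\geq 0$ and $R_{m,p}(\ell)\geq 0$ for every $\ell\ne \pm m$, then the Young bound \eqref{Young-R} combined with the identity \eqref{relPR} yields
$$
\frac{\lambda''(\D;V)}{2j_{m,p}^2}\;\geq\; \sum_{k\geq 0}\bigl(P_{m,p}(k)-R_{m,p}(2k+1+m)-R_{m,p}(2k+1-m)\bigr)|c_{2k+1}|^2 \;=\; 0,
$$
and the disk is a weak local minimizer.

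The range $m\geq 9$ is disposed of uniformly: by Lemma \ref{lem-pqm2}(i)-(ii) either $j_{m,p}^2\notin[\beta_m,\gamma_m]$ and $P_{m,p}(1)<0$, or $j_{m,p}^2\in(\beta_m,\gamma_m)$ and $P_{m,p}(2)<0$. Since $(m-1)/2\geq 4$ for $m\geq 9$, the excluded index is never $1$ or $2$, so the non-minimality criterion applies to every such $(m,p)$.

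For $2\leq m\leq 8$ I would proceed pair by pair, relying on Table \ref{table1}, the recurrences \eqref{bess1}--\eqref{bess4}, and Lemma \ref{lem-resto}. Whenever $j_{m,p}^2$ lies outside $[\beta_m,\gamma_m]$, Lemma \ref{lem-pqm2}(i) already provides $P_{m,p}(1)<0$, which is decisive unless $m=3$ (where $(m-1)/2=1$ is the excluded index); in this sub-case I expect to verify that $P_{3,p}(0)$ or $P_{3,p}(2)$ remains negative for every $p\geq 2$, a direct Bessel-function computation of the same flavour as the proof of Lemma \ref{lem-pqm2}. When $j_{m,p}^2$ lies inside $[\beta_m,\gamma_m]$, I would isolate the six favourable pairs $(2,1),(3,1),(4,1),(5,1),(5,2),(6,2)$ and verify for each the positivity of every $P_{m,p}(k)$ and $R_{m,p}(\ell)$ through monotonicity arguments modelled on Lemma \ref{lem-pqm2}(iii)-(iv), using $J_h(j_{m,p})>0$ beyond the order at which the first zero of $J_h$ overtakes $j_{m,p}$; the minimality criterion then applies. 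For the remaining pairs with $j_{m,p}^2\in[\beta_m,\gamma_m]$ and $(m,p)\notin\{(7,1),(7,2)\}$, I expect to exhibit either a negative $P_{m,p}(k)$ at some small $k\ne (m-1)/2$, or a destabilizing two-mode Fourier block whose associated $2\times 2$ matrix has negative determinant, exactly as in the proof of Lemma \ref{m7p12}(i).

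The case $m=7$ is genuinely special: the excluded index $(m-1)/2=3$ is precisely where the only potential instability sits, so the sign of \eqref{split-pr} depends on the phase $q$. Lemma \ref{m7p12} furnishes, for $p=1,2$, one value of $q$ that produces a negative derivative along some admissible deformation together with the opposite value of $q$ that yields a non-negative derivative along every admissible deformation. By Remark \ref{rem-qm}, these two values of $q$ correspond respectively to the branches $\lambda_h(\Omega_\e)$ and $\lambda_{h+1}(\Omega_\e)$, so only the upper branch ($\lambda_{27}$ for $p=1$, $\lambda_{50}$ for $p=2$) inherits local minimality, while the lower branch does not. For $p\geq 3$ a separate Bessel computation, similar in spirit to Lemma \ref{lem-pqm2}(iv) but yielding additional negative $R_{7,p}$ coefficients, shows that both branches lose minimality. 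The main obstacle I anticipate is the third paragraph: for $5\leq m\leq 8$ the zeros $j_{m,p}$ lack closed-form landmarks, so certifying the signs of the finitely many relevant $P_{m,p}(k)$ and $R_{m,p}(\ell)$ requires combining rigorous monotonicity estimates with controlled numerical bounds on the first few ratios $J_\ell'/J_\ell$, which is precisely the delicate estimation that underlies Lemmas \ref{lem-pqm2} and \ref{m7p12}.
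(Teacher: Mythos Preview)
Your overall architecture matches the paper's: the same dichotomy criteria from Remark \ref{rem-0} and \eqref{Young-R}--\eqref{relPR}, the same uniform treatment of $m\geq 9$ via Lemma \ref{lem-pqm2}(i)--(ii), the same six favourable pairs verified through positivity of all $R_{m,p}$, and the same splitting mechanism for $(7,1),(7,2)$ via Lemma \ref{m7p12}. So the approach is essentially the paper's.

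There is, however, one concrete gap in your treatment of $m=3$, $p\geq 2$. You propose to fall back on $P_{3,p}(0)$ or $P_{3,p}(2)$. The first is useless: $k=0$ corresponds to $c_1$, which you have already discarded as a translation. Whether $P_{3,p}(2)<0$ holds for all $p\geq 2$ is not established anywhere, and the paper does not attempt it. Instead the paper does for $m=3$ exactly what you reserve for $m=7$: it takes the single excluded-index mode $c_3=1$, so that \eqref{split-pr} reduces to $P_{3,p}(1)\pm R_{3,p}(0)$, and then checks by an explicit Bessel computation that \emph{both} signs are negative for $p\geq 2$. This kills both branches $\lambda_h,\lambda_{h+1}$ at once, which is precisely what is needed here (unlike $m=7$, where only one branch fails). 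You should route $m=3$ through this same ``$P\pm R$ at the excluded index'' argument rather than searching for a negative $P_{3,p}(k)$ at a neighbouring $k$.

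A minor simplification: for $(7,p)$ with $p\geq 3$ you do not need ``additional negative $R_{7,p}$ coefficients''. Since $(m-1)/2=3$, the index $k=1$ is \emph{not} excluded, and $j_{7,p}^2>\gamma_7$ for $p\geq 3$, so Lemma \ref{lem-pqm2}(i) gives $P_{7,p}(1)<0$ directly; the paper handles these pairs together with $(2,p),(4,p),(5,p),(6,p)$ in one stroke.
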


\begin{proof}
The proof is divided into several steps, in which we distinguish the following groups of pairs $(m,p)$:
\begin{itemize}
\item[{\it Case 1.}]  $(2,1)$, $(3,1)$, $(4,1)$, $(5,1)$, $(5,2)$, $(6,2)$;
\item[{\it Case 2.}] $(2,p)$, $(4,p)$ for $p\geq2$ and $(5,p)$, $(6,p)$, $(7,p)$ for $p\geq 3$;
\item[{\it Case 3.}] $(3,p)$ for $p\geq 2$;
\item[{\it Case 4.}]  $(6,1)$;
\item[{\it Case 5.}] $(8,p)$ for $p\geq 1$;
\item[{\it Case 6.}] $(m,p)$ for $m\geq 9$ and $p\geq 1$;
\item[{\it Case 7.}] $(7,1)$, $(7,2)$.
\end{itemize}
Note that the family of eigenvalues $\{\lambda_4=\lambda_5\,,\, \lambda_7=\lambda_8\,,\, \lambda_{11}=\lambda_{12}\,,\, \lambda_{16}=\lambda_{17}\,,\,  \lambda_{33}=\lambda_{34}\,,\, \lambda_{41}=\lambda_{42}\}$ corresponds to the pairs listed in Case 1, while  $\lambda_{26}=\lambda_{27}$ and $\lambda_{49}=\lambda_{50}$ correspond to the pairs of Case 7 (cf. Table \ref{table2}). Therefore, in Case 1 we will show the weak local minimality of the disk, in Cases 2 to 6, the non weak local minimality of the disk, and in Case 7 we will discuss the different behavior at $\D$ of the associated double eigenvalues. 

\medskip

\noindent {\it Case 1.} As already pointed out in Remark \ref{rem-0}, if we prove that $R_{m,p}(2k+1\pm m)$ are non negative for every $k$, we readily obtain the weak local minimality of $\D$ for such $\lambda$. A numerical computation shows that $R_{m,p} (2k +  1 \pm m)\geq 0 $ for every $k=1,\ldots,10$.
For the subsequent terms, a sufficient condition is the positive monotonicity with respect to $k$. In order to investigate such property, we compute the difference of two subsequent terms: setting for brevity $N:=2k +  1 \pm m$, we have
\begin{align}
& R_{m,p} (2(k+1) +  1 \pm m) -R_{m,p} (2k +  1 \pm m)  \notag
\\
& = 4 (N + 1) + 2 j_{m,p} \left[  \frac{J_{N+2}'(j_{m,p})}{J_{N+2}(j_{m,p})} - \frac{J_{N}'(j_{m,p})}{J_{N}(j_{m,p})}\right] = \frac{4(N+1) J_{N+1}^2(j_{m,p})}{J_{N+2}(j_{m,p}) J_N(j_{m,p})}\,,   \label{remainder}
\end{align}
where for the last equality we have used Lemma \ref{lem-resto}. For $k \geq 10$ it is easy to verify that $J_{N+2}$ and $J_N$ are both positive in $j_{m,p}$, so that the right-hand side of \eqref{remainder} is positive.

\medskip

\noindent{\it Case 2.} A direct computation shows that $j_{m,p}^2>\gamma_m$, therefore Lemma \ref{lem-pqm2}-(i) gives $P_{m,p}(1)<0$. Hence, since $m\neq 2 k +1 $ when $k=1$ (cf. Remark \ref{rem-0}), the deformation corresponding to $c_3=1$ and $c_i=0$ for every $i\neq 3$ gives a negative second order shape derivative at $\D$ in direction $V$.

\medskip

\noindent{\it Case 3.} Taking a deformation $V$ such that $c_3=1$ and $c_i=0$ for every $i\neq 3$, we infer (see Remark \ref{rem-qm}) that the second order shape derivative at $\D$ in direction $V$ reads, up to a positive multiplicative constant, $P_{3,p}(1)\pm R_{3,p}(0)$. A direct computation gives
$$
P_{3,p}(1)\pm R_{3,p}(0) = \frac{64   \left[  80-19 j_{3,p}^2 \pm (j_{3,p}^2-80)\right] }{(j_{3,p}^2-8)(j_{3,p}^2-80)}= \left\{ \begin{array}{lll}
\displaystyle{- \frac{64\cdot 18 j_{3,p}^2 }{(j_{3,p}^2-8)(j_{3,p}^2-80)}<0}
\\
\ 
\\
\displaystyle{-\frac{64\cdot 20}{(j_{3,p}^2-80)}<0}
\end{array}
\right.
$$
where the last inequalities follow from the estimate $j_{3,p}^2>80$ for every $p\geq 2$.

\medskip

\noindent{\it Case 4.} By direct computation, we get  $P_{6,1}(2)<0$. Hence, as $m\neq 2k+1$ when $m=6$ and $k=2$  (cf. Remark \ref{rem-0}), the deformation corresponding to $c_5=1$ and $c_i=0$ for every $i\neq 5$ gives a negative second order shape derivative at $\D$ in direction $V$.

\medskip

\noindent{\it Case 5.} Since $j_{8,1}^2 < 168 = \beta_8$ and $j_{8,p}^2 > 360 = \gamma_8$ for every $p\geq 3$, by  Lemma \ref{lem-pqm2}-(i), we get $P_{8,p}(1)<0$ for every $p\neq 2$. A direct computation shows that $P_{8,3}(2)<0$. Thus, the deformation with coefficients $c_i=\delta_{i3}$ in case $p\neq 2$ and $c_i=\delta_{i5}$ in case $p=2$ gives a negative second order derivative at $\D$.

\medskip

\noindent{\it Case 6.} Here Lemma \ref{lem-pqm2} gives an exhaustive answer: for every $p\geq 1$, $P_{9,p}(k)<0$ either for $k=1$ or for $k=2$. Thus,  to have a negative second order derivative at the disk, it is enough to chose the deformation corresponding to $c_i=1$ if $i=2k+1$ and $0$ else.

\medskip

\noindent{\it Case 7.} Here $m=7$ and $p=1$ (resp. $p=2$). Recalling Table \ref{table2} this corresponds to the second order shape derivative of $\lambda_{26}$ and $\lambda_{27}$ (resp. $\lambda_{49}$ and $\lambda_{50}$). By combining Lemma \ref{m7p12} with formula \eqref{Tay}, we infer that $\lambda_{27}$ (resp. $\lambda_{50}$) is a weak local minimizer, while $\lambda_{26}$ (resp. $\lambda_{49}$) is not.
\end{proof}

\begin{remark}
The positivity of $\lambda_h''$ is not enough for the optimality of weak local minimizers, and a necessary condition is the coercivity of the second order shape derivative, with respect to the $H^{1/2}$ norm of the deformation (see \cite{Dam, DL}).
Establishing the coercivity of $\lambda_h''$ turns out to be very complicated in general, due to the presence of the complex number $q$ and of the terms involving Bessel functions (see \eqref{derivataseconda}). Nevertheless, for some eigenvalues it is straightforward. Take for example $\lambda_3$: according to \eqref{derivataseconda} and Remark \ref{rem-qm}, we infer that the term multiplied by $q$ is non negative, thus
\begin{align*}
\lambda_3''(\D;V) &\geq  2 \lambda_3(\D) \left[ \sum_{\ell \in \mathbb Z } (\ell^2-1)|c_\ell|^2 + 2 \sum_{|\ell|\neq 1} \left( 2 + j_{1,1}  \frac{J_{\ell}'(j_{1,1})}{J_{\ell}(j_{1,1})}\right)|c_{1-\ell}|^2 \right] 
\\
&
= 2 \lambda_3(\D) \left[ \|\phi\|^2_{H^1} + 2 \sum_{k \geq 1}j_{1,1} \left( \frac{J_{2k+2}'(j_{1,1})}{J_{2k+2}(j_{1,1})}+\frac{J_{2k}'(j_{1,1})}{J_{2k}(j_{1,1})}\right)|c_{2k+1}|^2 \right] \geq  2 \lambda_3(\D) \|\phi\|^2_{H^1(\partial \D)}\,.
\end{align*}
Namely we have the $H^1$ coercivity (and hence the $L^\infty$ one) of $\lambda_3''$ at $\D$.
\end{remark}

\section{Some numerical results}\label{secnum}

\subsection{Numerical framework and optimization algorithm} We present in this section a numerical algorithm which can search for the shapes $\Omega$ which minimize $\lambda_h(\Omega)$ under constant width constraint. In Theorem \ref{wlm1} we prove that the disk is not a weak local minimizer for any of its simple eigenvalues. Moreover, in \ref{wlm2} it is proved that when the eigenvalue of the disk is double, only for a precise finite set of indices $h \geq 1$ the disk is a weak local minimizer for $\lambda_h(\Omega)$.  The computations presented below allow us to give further evidence that in these cases the disk is probably a global minimizer. Furthermore, for small enough indices $h$ it is possible to find shapes of given constant width which have their $h$-th eigenvalue smaller than the corresponding $h$-th eigenvalue for the disk.

The constant width constraint (or the diameter constraint) is difficult to handle numerically in optimization algorithms. One of the issues which appears when dealing with gradient based optimization algorithms is that admissible perturbations of the boundary which preserve the constant width property are not local. We refer to \cite{spheroforms}, \cite{BLOcw} and \cite{oudetCW} for methods of dealing with constant width constraint related to convex geometry. In \cite{BHcw12} the authors describe how to use the support function and its decomposition into Fourier series in order to study numerically optimization problems in the class of two dimensional shapes of constant width. It is this approach which inspired the method described below. Further applications of this method and extensions to higher dimensions are presented in \cite{ABcw}.

As was already noted in previous sections, if $f$ is the support function of a convex shape of constant width 2, then $f''(\theta)+f(\theta) \geq 0$ and $f(\theta)+f(\theta+\pi)=2$ for all $\theta \in [0,2\pi]$. 
Note that when writing the Fourier expansion of $f$
\[ f(\theta) = 1+\sum_{k=1}^\infty \left( a_k\cos(k\theta)+b_k \sin(k\theta)\right) \]
the constant width condition simply means that all coefficients with positive and even index must be equal to zero: $a_{2k} = b_{2k}=0$, for $k \geq 1$. 
Even though the constant width condition is simple to express in terms of the coefficients, we still need to impose the convexity condition
\begin{equation} f''(\theta)+f(\theta) = 1 + \sum_{k=1}^\infty \left( a_k(1-k^2)\cos(k\theta) + b_k(1-k^2)\sin(k\theta) \right) \geq 0, \ \forall \theta \in [0,2\pi].
\label{convex_ineq}
\end{equation}
In \cite{BHcw12} the authors provide an analytic characterization for the Fourier coefficients which satisfy this inequality. Using this analytic characterization leads to a semidefinite programming problem, which needs to be handled using specialized optimization software. Moreover, functionals considered in \cite{BHcw12} were always linear or quadratic in terms of the Fourier coefficients. Therefore, nonlinear functionals related to the Dirichlet Laplace eigenvalues $\lambda_h(\Omega)$ cannot be handled with the methods described in \cite{BHcw12}.

In order to deal with the convexity constraint, we choose instead to use a different method, described in \cite{antunesMM16}. Instead of searching for a global characterization of the Fourier coefficients of the support function of a convex set, we only impose the convexity constraint on a discretization of $[0,2\pi]$ fine enough. Indeed, let $\theta_1,...,\theta_M$ be a uniform discretization of $[0,2\pi]$, e.g., $\theta_i = \frac{2\pi i}{M}$, then condition \eqref{convex_ineq} can be replaced by  
\begin{equation}
1 + \sum_{k=1}^\infty \left( a_k(1-k^2)\cos(k\theta_i) + b_k(1-k^2)\sin(k\theta_i) \right) \geq 0, i=1,...,M.
\label{discrete_convex}
\end{equation}
This second formulation of the convexity constraint, which is weaker than \eqref{convex_ineq}, has the advantage of being linear in terms of the Fourier coefficients. This type of constraints can be implemented in many standard constrained optimization routines, like for example the \texttt{fmincon} function in the Matlab Optimization Toolbox.

In order to have a finite number of variables in our optimization, we only consider shapes which can be parametrized with Fourier coefficients up to rank $N$
\begin{equation}
 f(\theta) = 1+\sum_{k=1}^N \left( a_k\cos(k\theta)+b_k \sin(k\theta)\right).
 \label{finite_fourier}
 \end{equation}
We note that limiting the number of Fourier coefficient is not too restrictive, in the sense that when $N$ is large enough the class of shapes parametrized by support function given in \eqref{finite_fourier} can give a satisfactory approximation of any given shape. Moreover, one can repeat the optimization procedure for an increasingly higher number of coefficients, until we observe that the optimal shape does not change anymore and that the optimal value of the cost function does not improve.


%

In order to have an efficient optimization algorithm, we compute the derivatives of the eigenvalue in terms of the Fourier coefficients of the support function. To this aim, we first consider two types of perturbations, a cosine term and a sine term, namely two families of deformations {$\{V_k\}_{k}$ and $\{W_k\}_k$} as in Table \ref{perturb}.
\begin{table}[h]
\bgroup
\def\arraystretch{1.5}
\begin{tabular}{|c|p{65mm}|c|}
\hline
type & boundary perturbation & normal component
\\ 
\hline
 $\cos(k\theta)$ & ${V_k} = (\cos(k\theta)\cos \theta +k\sin(k\theta)\sin(\theta),$ \newline    $\cos(k\theta)\sin \theta -k\sin(k\theta)\cos(\theta))$     & ${V_k}.n = \cos(k\theta)$ \\ \hline     
   $\sin(k\theta)$ & ${W_k} = (\sin(k\theta)\cos \theta -k\cos(k\theta)\sin(\theta),$ \newline
      $\sin(k\theta)\sin \theta +k\cos(k\theta)\cos(\theta))$     & ${W_k}.n = \sin(k\theta)$ \\
 \hline 
\end{tabular} 
\caption{Transformation of perturbations of the support function into boundary perturbations and their corresponding normal components.}
\label{perturb}
\egroup
\end{table}

If $\lambda_h(\Omega)$ is simple, recalling the Hadamard formula \eqref{hadamard} for the first order shape derivative, and performing a change of variables (see \eqref{ds}), we have:
\begin{eqnarray*}
 \lambda_h'(\Omega; V_k) = -\int_0^{2\pi} (\partial_n u_h(x(\theta),y(\theta)))^2 \cos(k\theta)  (f''(\theta)+f(\theta))d\theta\,, \\
  \lambda_h'(\Omega;W_k ) = -\int_0^{2\pi} (\partial_n u_h(x(\theta),y(\theta)))^2 \sin(k\theta)  (f''(\theta)+f(\theta))d\theta \,,
\end{eqnarray*}
where $u_h$ is a normalized eigenfunction associated to $\lambda_h(\Omega)$.

%
%

In the case of double eigenvalues, difficulties may arise in the optimization (cf. \cite{oudeteigs}), since  $\lambda_h$ is not differentiable.
However, when using a precise solver in order to compute the eigenvalues and eigenfunctions, for example using spectral methods \cite{mpspack}, and a quasi-Newton method is used for optimization, these difficulties may be averted. We refer to the analysis in \cite{osting10} for further details.

The effective computation of the eigenvalues and eigenfunctions is done with the Matlab package MpsPack \cite{mpspack}. This software uses an accurate spectral method based on particular solutions. In our algorithm, we use the "ntd" method with $100$ basis functions. The computation of the integrals which represent the derivatives with respect to each Fourier coefficient is done using an order $1$ trapezoidal quadrature.

The optimization is done in Matlab using the \texttt{fmincon} procedure. This routine can perform the optimization of general functionals under matrix vector product equality/inequality constraints and non-linear equality/inequality constraints. As parameters we choose $N=40$, i.e. $80$ Fourier coefficients ($40$ sines and $40$ cosines). We use between $500$ and $1000$ points on the boundary where we impose the linear inequality constraints given by \eqref{discrete_convex}. The optimization algorithm is \emph{interior-point} with \emph{lbfgs} Hessian approximation. The constant width condition is imposed via a matrix product equality: all even coefficients of sine and cosine are zero. 

In order to avoid possible local minima, we choose a random vector of Fourier coefficients as initial condition and we project it onto the constraints, using again \texttt{fmincon} with a fictious objective function. We perform several optimizations starting each time from different random initializations in order to validate our results.

\subsection{Numerical results}

As noted in Theorems \ref{wlm1}, \ref{wlm2} there are only finitely many cases where the disk is a weak local minimizer. In each of these cases, the numerical simulations show that the disk is probably the global optimizer, since our algorithm did not manage to find better candidates. 

We run our algorithm for every value $h \leq 20$ and we note that the results are in accordance with the theoretical aspects recalled above. When the disk is not a weak local minimizer we manage to find shapes of fixed constant width $2$ which have a lower $h$-th eigenvalue than the corresponding one for the unit disk. We summarize these results in Figure \ref{num_results}.

The numerical simulations allow us to formulate some conjectures regarding the multiplicity of the optimal eigenvalues. We split the analysis in three cases: indices corresponding to a simple eigenvalue on the disk, first and second index in a pair of double eigenvalues for the disk.
\begin{itemize}[leftmargin=15pt]
\item $\lambda_h(\Bbb{D}))$ is simple: the numerical optimizer $\Omega$ has simple $h$-th eigenvalue
\item $\lambda_h(\Bbb{D}))=\lambda_{h+1}(\Bbb{D})$: the numerical optimizer $\Omega$ has simple $h$-th eigenvalue
\item $\lambda_{h-1}(\Bbb{D}))=\lambda_h(\Bbb{D})$: the numerical optimizer $\Omega$ has double $h$-th eigenvalue - $\lambda_{h-1}(\Omega) =\lambda_h(\Omega)$.  
\end{itemize}

We also remark that the numerical solutions we obtain for problem \eqref{minD} all have non-zero curvature radius, which means that they do not have singular points in their boundary, suggesting that they are $C^2$ regular.

\begin{table}
\centering
\begin{tabular}{ccc}
\includegraphics[width=0.25\textwidth]{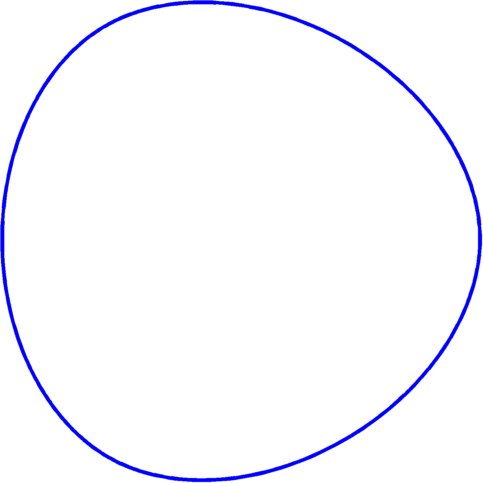} & 
\includegraphics[width=0.25\textwidth]{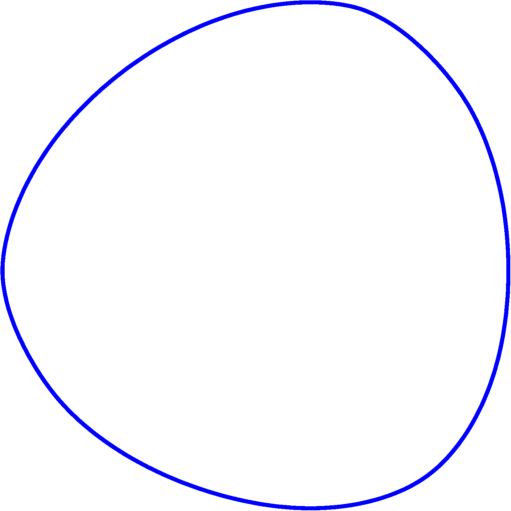} &
\includegraphics[width=0.25\textwidth]{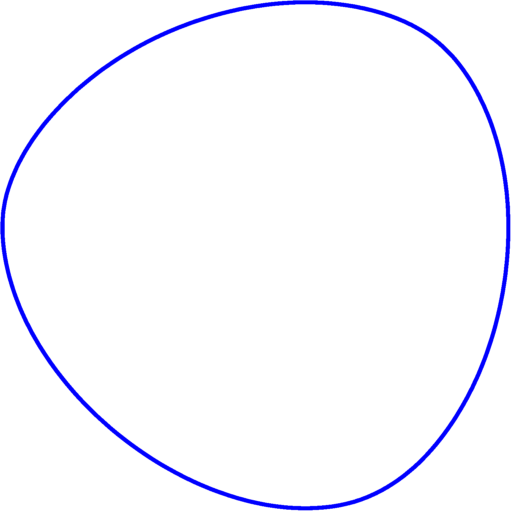} \\
$\lambda_6(\Omega)=30.453$ & 
$\lambda_9(\Omega)=49.080$ & 
$\lambda_{10}(\Omega)=49.084$ \\
$\lambda_6(\Bbb{D}) = 30.4713$ & 
$\lambda_9(\Bbb{D}) = 49.2184$ &
$\lambda_{10}(\Bbb{D}) = 49.2184$\\
\includegraphics[width=0.25\textwidth]{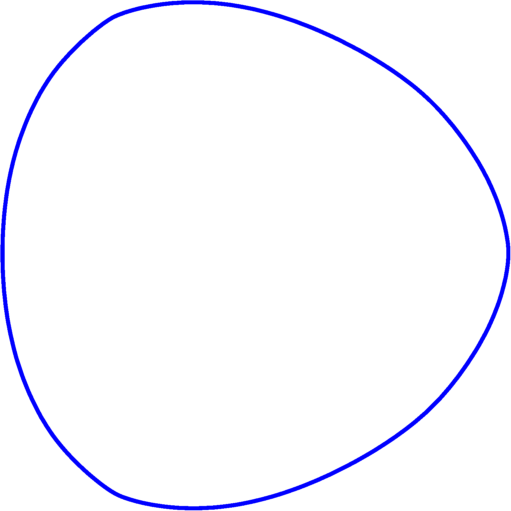} & 
\includegraphics[width=0.25\textwidth]{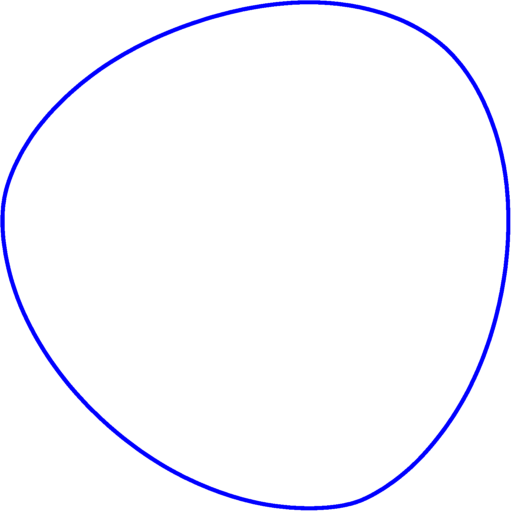} &
\includegraphics[width=0.25\textwidth]{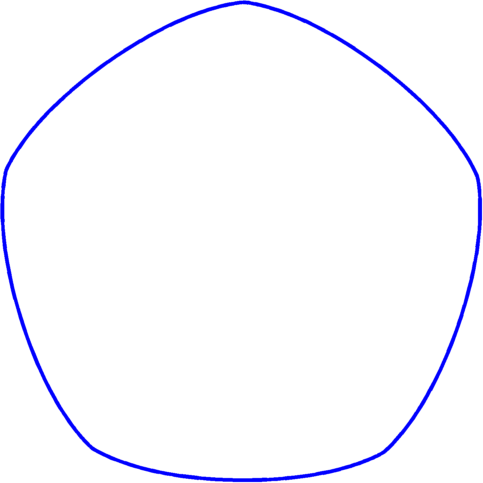} \\
$\lambda_{13}(\Omega)=70.222$ & 
$\lambda_{14}(\Omega)=70.244$ & 
$\lambda_{15}(\Omega)=73.589$ \\
$\lambda_{13}(\Bbb{D}) = 70.8499$ & 
$\lambda_{14}(\Bbb{D}) = 70.8499$ &
$\lambda_{15}(\Bbb{D}) = 74.8868$\\
\includegraphics[width=0.25\textwidth]{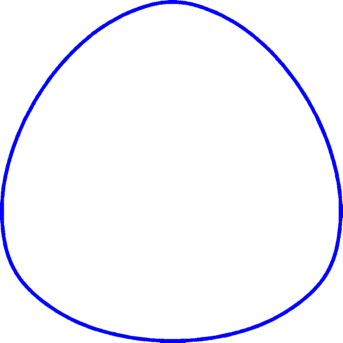} & 
\includegraphics[width=0.25\textwidth]{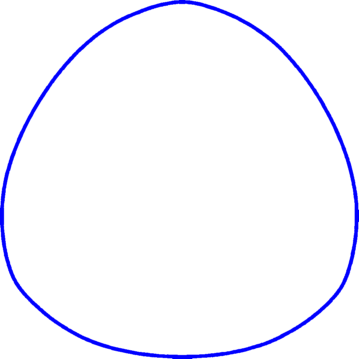} &
\includegraphics[width=0.25\textwidth]{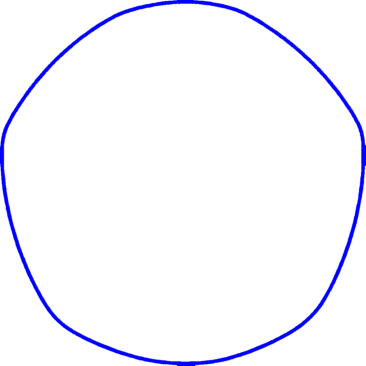} \\
$\lambda_{18}(\Omega)=93.626$ & 
$\lambda_{19}(\Omega)=93.683$ & 
$\lambda_{20}(\Omega)=98.254$ \\
$\lambda_{13}(\Bbb{D}) = 95.2776$ & 
$\lambda_{19}(\Bbb{D}) = 95.2776$ &
$\lambda_{20}(\Bbb{D}) = 98.7263$
\end{tabular}
\caption{Non circular shapes of width $2$ which are candidates to be the minimizers of $\lambda_h$, for $h=6,9,10,13,14,15,18,20$.}
\label{num_results}
\end{table}


\medskip

Beniamin \textsc{Bogosel}, CNRS, Centre de Math\'ematiques Appliqu\'ees, \'Ecole Polytechnique, \\ email: \texttt{beniamin.bogosel@cmap.polytechnique.fr}

Antoine \textsc{Henrot}, Institut \'Elie Cartan de Lorraine, UMR 7502, Universit\'e de Lorraine CNRS, email: \texttt{antoine.henrot@univ-lorraine.fr} 

Ilaria \textsc{Lucardesi}, Institut \'Elie Cartan de Lorraine, UMR 7502, Universit\'e de Lorraine CNRS, email: \texttt{ilaria.lucardesi@univ-lorraine.fr}

\end{document}